\newtheorem{theorem}{Theorem}
\newtheorem{lemma}[theorem]{Lemma}
\newtheorem{proposition}[theorem]{Proposition}
\theoremstyle{definition}
\newtheorem*{definition}{Definition}
\theoremstyle{remark}
\newtheorem*{remark}{Remark}
\numberwithin{equation}{section}
\newcommand*\de{\mathrm{d}}
\DeclareMathOperator{\R}{\mathbb{R}}
\DeclareMathOperator{\Z}{\mathbb{Z}}
\DeclareMathOperator{\T}{\mathbb{T}}
\DeclareMathOperator{\Rea}{Re}
\DeclareMathOperator{\Ima}{Im}
\newcommand{\innerprod}[2]{\langle #1, #2 \rangle}
\begin{document}

\title[Almost sure growth bounds for NLS and \lowercase{g}K\lowercase{d}V]{On growth of Sobolev norms for periodic non-\\linear Schrödinger and generalised Korteweg-de Vries equations under critical Gibbs dynamics}


\author{Fabian Höfer}
\address{Mathematics Münster, University of Münster, Orléans-Ring 10, 48149 Münster, Germany}
\email{fabian.hoefer@uni-muenster.de}
\thanks{F.H. was funded by the Deutsche Forschungsgemeinschaft (DFG, German Research Foundation) under Germany's Excellence Strategy EXC 2044 –390685587, Mathematics Münster: Dynamics–Geometry–Structure.}

\author{Niko A.\ Nikov}
\address{School of Mathematics, The University of Edinburgh, James Clerk Maxwell Building, Peter Guthrie Tait Road, Edinburgh, EH9 3FD, United Kingdom}
\email{n.a.nikov@sms.ed.ac.uk}

\subjclass[2020]{Primary 35Q55; Secondary 35Q53, 35R60}

\date{}

\dedicatory{}

\begin{abstract}
	We prove logarithmic growth bounds on Sobolev norms of the focusing mass-critical NLS and gKdV equations on the torus, which hold almost surely under the focusing Gibbs measure with optimal mass threshold constructed by Oh, Sosoe, and Tolomeo in [Invent.\ Math.\ 227 (2022), no.\ 3, 1323--1429]. More precisely, we will establish almost sure growth bounds for solutions $u(t)$ of the equations of the form 
        \[
            \sup_{t \in [-T,T]} \norm{u(t)}_{H^s(\T)} \lesssim_{s, u_0} \log(2+T)
        \]
        with initial data $u_0 \in H^s(\T)$ for $s< \frac{1}{2}$. The proof uses a generalisation of Bourgain's invariant measure argument for measures in a suitable Orlicz space.
\end{abstract}

\maketitle
\thispagestyle{plain}

\section{Introduction} \label{Introduction}
	We consider Cauchy problems for the focusing nonlinear Schrödinger equation
	\begin{equation}\tag{NLS}\label{NLS}
		  \begin{cases} 
                &i \partial_t u + \partial_x^2 u + \abs{u}^{p-2} u = 0 \\
                &u(0)=u_0,
            \end{cases}
	\end{equation}
    and the focusing generalised Korteweg-de Vries equation
	\begin{equation}\tag{gKdV}\label{gKdV}
		  \begin{cases}
		      &\partial_t u + \partial_x^3 u + \partial_x (u^{p-1})= 0 \\
                &u(0)=u_0
		  \end{cases}
	\end{equation}
    for $p > 2$ posed on the one-dimensional torus $\T = \R / \Z$. Our focus will be on mass-critical equations corresponding to $p=6$. We will adopt the point of view of statistical mechanics, dating back to the seminal paper of Lebowitz, Rose, and Speer \cite{LRS}, who initiated the study of focusing Gibbs measures for \eqref{NLS}. Later, Bourgain, in \cite{Bourgain94}, studied dynamics under the Gibbs measure; there is a rich literature in this direction: see e.g.\ \cite{BL23,Bri23,BDNY24,CLO24, DNY22, DNY24, DR23,FT22,LW24,Rob24,ST20}. Using ideas from \cite{Bourgain94}, we obtain almost sure growth bounds of $H^s(\T)$ norms, $s < \frac12$, for \eqref{NLS} and \eqref{gKdV} under their respective Gibbs measure at the optimal mass threshold, as built by Oh, Sosoe, and Tolomeo in \cite{OST}. See \Cref{a.s. bounds NLS}. \medskip
    
    We start by reformulating \eqref{NLS} and \eqref{gKdV} as Hamiltonian systems:
	\[
		\partial_t u = - i \frac{\delta H_\mathrm{NLS}}{\delta \bar{u}}, \qquad H_\mathrm{NLS}(u) = \frac{1}{2} \int_{\T} \abs{\partial_x u}^2 \, \de x - \frac{1}{p} \int_{\T} \abs{u}^p \, \de x
	\]
	for \eqref{NLS}, and
	\[
		\partial_t u = \partial_x \frac{\delta H_\mathrm{gKdV}}{\delta u}, \qquad H_\mathrm{gKdV}(u) = \frac{1}{2} \int_{\T} (\partial_x u)^2 \, \de x - \frac{1}{p} \int_{\T} u^p \, \de x
	\]
	for \eqref{gKdV}.
    Here, $\frac{\delta}{\delta u}$ and $\frac{\delta}{\delta \bar{u}}$ denote Fréchet derivatives. In view of the Hamiltonian structures, we expect Gibbs measures $\rho$, which are probability measures formally given by
	\[
		\de \rho (u) = \frac{1}{Z} e^{-H(u)}\, \de u,
	\]
	to be invariant under the dynamics of the equations. Here, $H$ is a Hamiltonian functional and $Z$ denotes a normalisation constant, called the partition function. By ``invariant'' we mean that $\rho\circ\Phi(-t)=\rho$ for all $t \in \R$, where $\Phi(t)$ is the flow of each equation. The defocusing case corresponds to a negative (resp.\ positive) sign in front of the nonlinearity in the equation (resp.\ in the Hamiltonian), and the defocusing Gibbs measure corresponds to the well-studied $\Phi^p$-measure from quantum field theory. We note that $H_{\mathrm{NLS}}$ is essentially the complex version of $H_{\mathrm{gKdV}}$, which is reflected in the Gibbs measures. Inserting $H_\mathrm{NLS}$ for the Hamiltonian, we arrive at the following formal density for the focusing Gibbs measure for \eqref{NLS}:
	\begin{equation}\label{formal Gibbs measure}
		\de \rho_p (u)  = \frac{1}{Z_p} e^{\frac{1}{p} \int_{\T} \abs{u}^p \, \de x - \frac{1}{2} \int_{\T} \abs{\partial_x u}^2 \, \de x} \, \de u.
	\end{equation}
    Since there is no infinite-dimensional analogue $\de u$ of the Lebesgue measure, as a first step to rigorously defining the Gibbs measure, we write it as a density with respect to a Gaussian:
    \begin{equation}\label{untruncated Gibbs measure}
        \de\rho_p (u)=\frac{1}{Z_p} e^{\frac{1}{p} \int_{\T} \abs{u}^p \, \de x} \,\de\mu_0(u).
    \end{equation}
    Here, $\mu_0$ is the Gaussian measure with inverse covariance $- \Delta$, formally given by
    \begin{equation}\label{Massless Gaussian}
        \de \mu_0(u) = \frac{1}{Z} e^{- \frac{1}{2} \int_{\T} \abs{\partial_x u}^2\, \de x}\, \de u.       
    \end{equation}
    The measure can be seen as the law of the random Fourier series
	\[
	   \sum_{n \in \Z\setminus\{0\}} \frac{g_n}{2 \pi n} e^{2\pi i n x}, 
	\]
    taking values in the space of periodic distributions $\mathcal{D}'(\T)$, where $\{g_n\}_{n \in \Z \setminus \{0\}}$ denotes a sequence of independent standard complex-valued Gaussian random variables.\footnote{I.e.\ $\Rea g_n$ and $\Ima g_n$ are independent real-valued mean-zero Gaussian random variables with variance $\frac{1}{2}$. In the real-valued setting of \eqref{gKdV}, we need to impose $g_{-n} = \overline{g_n}$ for all $n \in \Z \setminus \{0\}$.}
    An easy calculation shows that the measure $\mu_0$ is supported on $\bigcap_{s<\frac{1}{2}}H^s(\T)$. In the focusing case, $H$ is not bounded from below, and one can show that the density $e^{\frac{1}{p} \int_{\T} \abs{u}^p \, \de x}$ in \eqref{untruncated Gibbs measure} is not integrable with respect to $\mu_0$. For this reason, \cite{LRS} considered the focusing Gibbs measure with a mass cutoff,\footnote{In the defocusing case,  $H$ is bounded from below and thus no mass cutoff is needed.} i.e.
	\begin{equation}\label{truncated Gibbs measure mean-zero}
		\de \rho_{p,K} (u)  = \frac{1}{Z_{p,K}} e^{\frac{1}{p} \int_{\T} \abs{u}^p \, \de x} \mathbbm{1}_{\{M(u) \leq K\}} \, \de \mu_0(u),
	\end{equation}
	where $M(u) = \int_{\T} \abs{u}^2 \, \de x$ denotes the mass and $K>0$. Recalling that the Hamiltonian $H$ and the mass $M$ are conserved under the NLS and gKdV dynamics, we expect \eqref{truncated Gibbs measure mean-zero} to be invariant measures whenever they exist. \medskip
    
    We recall the normalisability of $\rho_{p,K}$; see \cite{OST} for details. Let $Q$ be the unique even and positive optimiser for the Gagliardo-Nirenberg-Sobolev inequality on $\R$ (see Appendix B in \cite{Tao} for details), namely
	\begin{equation}\label{GNS} \tag{GNS}
		\norm{u}_{L^p(\R)}^p \le C_{\rm GNS}(p) \norm{u}_{L^2(\R)}^{2 + \frac{p-2}{2}}\norm{\partial_x u}_{L^2(\R)}^{\frac{p-2}{2}}
	\end{equation}
	at $p=6$, where $\norm{Q}_{L^6(\R)}^6 = 3 \norm{\partial_x Q}_{L^2(\R)}^{2}$. Then $\rho_{p,K}$ is normalisable (i.e.\ $Z_{p,K}<\infty$) exactly for $2<p<6$ and $K>0$, or $p=6$ and $K\le \norm{Q}_{L^2(\R)}^2$. This was proven for the regimes $2<p<6$, $K>0$, and $p=6$, $K<\norm{Q}_{L^2(\R)}^2$, as well as $p=6$, $K>\norm{Q}_{L^2(\R)}^2$ in \cite{LRS}, although the proof contained a gap, and was corrected later in \cite{Bourgain94} for $p=6$ and $K>0$ sufficiently small. The proof of the normalisability and invariance at $p=6$ and $K< \norm{Q}_{L^2(\R)}^2$ and even at the optimal threshold when $K= \norm{Q}_{L^2(\R)}^2$ was completed in \cite{OST}.\footnote{This result was somewhat unexpected due to the existence of minimal mass blow-up solutions $u_*$ to \eqref{NLS} with $\norm{u_*}_{L^2(\T)} = \norm{Q}_{L^2(\R)}$, constructed by Ogawa and Tsutsumi in \cite{OgawaTsutsumi}.} There are also several works that study Gibbs measures for \eqref{NLS} in the infinite volume limit; see e.g.\ \cite{Rid02,SS24,TolomeoWeber}. \medskip

    In the following, we will focus on the case $p = 6$ and use the notation $\rho = \rho_{6,K_0}$ with $K_0=\norm{Q}_{L^2(\R)}^2$. Using Bourgain's invariant measure argument introduced in \cite{Bourgain94} (see \Cref{Bourgain's strategy}), one can show that, for $K < \norm{Q}_{L^2(\R)}^2$ and $s < \frac12$, almost surely with respect to $\rho_{6,K}$, there exists a constant $C(s,u_0)>0$ such that 
    \begin{equation} \label{NLS GWP}
        \sup_{t \in [-T,T]} \norm{u(t)}_{H^s(\T)} \leq C(s,u_0) \sqrt{\log(2+T)}
    \end{equation}
    and one has $\rho_{6,K}$-a.s.\ global well-posedness of \eqref{NLS} in $\bigcap_{s < \frac{1}{2}} H^{s}(\T)$. However, the constant $C(s,u_0)$ obtained from Bourgain's strategy blows up as $M(u_0)$ increases to $\norm{Q}_{L^2(\R)}^2$ as a consequence of the fact that the density $\frac{\de\rho}{\de\mu}$ is not in $L^p(\mu)$ for any $p > 1$ (see \Cref{density is not in Lp}). This does not affect the $\rho$-a.s.\ global well-posedness, as the shell $\{ M(u_0)=\norm{Q}_{L^2(\R)}^2 \}$ has zero $\rho$-measure; see Corollary 1.6 in \cite{OST} and Appendix C in \cite{ChapoutoKishimoto} for \eqref{gKdV}. Our main result improves this global well-posedness in the form of a growth bound as in \eqref{NLS GWP}, where we trade the divergent constant for a power of the logarithm. The similarity of $H_\mathrm{NLS}$ and $H_\mathrm{gKdV}$ allows us to prove a counterpart in the \eqref{gKdV} case.
    \begin{theorem}\label{a.s. bounds NLS}
		Let $p = 6$ and $K = \norm{Q}_{L^2(\R)}^2$. 
        \begin{enumerate}[label = \normalfont{(\roman*)}] \label{}
            \item \textnormal{(A.s.\ bounds for \eqref{NLS}).} Let $H = H_{\mathrm{NLS}}$. Then, for $\rho$-almost every $u_0\in\bigcap_{s<\frac12}H^s(\T)$, there exists a global-in-time solution $u \in \bigcap_{s < \frac{1}{2}} C(\R ; H^{s}(\T))$ to the equation \eqref{NLS} with initial data $u_0$ which satisfies
		  \[
		  	\sup_{t \in [-T,T]} \norm{u(t)}_{H^s(\T)} \leq C(s, u_0) \log(2+T)
		  \]
		  for some constant $C(s,u_0) >0$ and all $s < \frac{1}{2}$ and $T>0$.
          \item \textnormal{(A.s.\ bounds for \eqref{gKdV}).} Let $H = H_{\mathrm{gKdV}}$. Then, for $\rho$-almost every $u_0\in\bigcap_{s<\frac12}H^s(\T)$, there exists a global-in-time solution $u \in \bigcap_{s < \frac{1}{2}} C(\R ; H^{s}(\T))$ to the equation \eqref{gKdV} with initial data $u_0$ which satisfies
		\[
			\sup_{t \in [-T,T]} \norm{u(t)}_{H^s(\T)} \leq C(s, u_0) \log(2+T)
		\]
		for some $C(s,u_0)>0$ and all $s < \frac{1}{2}$ and $T>0$.
        \end{enumerate}
	\end{theorem}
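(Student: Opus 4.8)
The plan is to adapt Bourgain's invariant measure argument, replacing the $L^p(\mu)$-integrability of the density $\frac{\de\rho}{\de\mu}$ — which fails at the critical mass threshold $K_0=\norm{Q}_{L^2(\R)}^2$ — with a weaker Orlicz-type integrability. Since the source of the divergence is that $\frac{\de\rho}{\de\mu}$ lies in no $L^p(\mu)$ with $p>1$, I would first establish (via the sharp Gagliardo--Nirenberg--Sobolev inequality together with concentration estimates from \cite{OST}) that the density nonetheless lies in a suitable Orlicz space — concretely, that $\frac{\de\rho}{\de\mu}$ belongs to the Orlicz class associated to a Young function like $\Psi(x) = x(\log(2+x))^{-\alpha}$ for appropriate $\alpha>0$, equivalently that $\int \frac{\de\rho}{\de\mu}\, (\log(2 + \frac{\de\rho}{\de\mu}))^{-\alpha}\, \de\mu < \infty$. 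This is the substitute for the finite $L^p$ norm, and it is what forces the exponent on the logarithm in the final bound to be $1$ rather than the $\frac12$ appearing in \eqref{NLS GWP}.

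Next I would run the standard scheme: set up the frequency-truncated equations (with flow $\Phi_N(t)$) whose truncated Gibbs measures $\rho_N$ are exactly invariant, and prove convergence $\rho_N \to \rho$ in total variation (or at least strongly enough to transfer the Orlicz bound uniformly in $N$). For the truncated dynamics one has the a priori estimate that, on the ball $B_R = \{\norm{u}_{H^s} \le R\}$, the local existence time is $\gtrsim R^{-\theta}$ for some $\theta>0$, so that after time $T$ the solution can leave $B_R$ only if $\norm{u}_{H^s}$ was already forced up; iterating over $\sim T R^{\theta}$ time steps and using invariance of $\rho_N$ gives
\[
	\rho_N\Bigl(\Bigl\{ \sup_{t\in[-T,T]} \norm{\Phi_N(t)u_0}_{H^s} > R \Bigr\}\Bigr) \lesssim T R^{\theta}\, \rho_N(\{\norm{u_0}_{H^s} > R - O(1)\}).
\]
The key point is then to bound the right-hand side. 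Under a genuine $L^p(\mu)$ density one bounds $\rho_N(\norm{u_0}_{H^s}>R)$ by $\norm{\text{density}}_{L^p}\, \mu(\norm{u_0}_{H^s}>R)^{1/p'}$, and the Gaussian tail $\mu(\norm{u_0}_{H^s}>R) \lesssim e^{-cR^2}$ gives $e^{-cR^2/p'}$, hence $R \sim \sqrt{\log T}$. With only the Orlicz bound, Hölder's inequality in the Orlicz--Hölder form yields instead something like $\rho_N(\norm{u_0}_{H^s}>R) \lesssim (\log(2+1/\mu(\norm{u_0}_{H^s}>R)))^{\alpha}\, \mu(\norm{u_0}_{H^s}>R)$ up to constants — i.e.\ one loses a power of $\log$ of the Gaussian measure, which is $\sim R^2$; absorbing this into the exponential Gaussian tail still leaves $e^{-cR^2}$ decay but now the trade-off to beat $TR^\theta$ forces $R \sim \log T$. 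Choosing $R = R_j \sim C(s,u_0)\log(2+2^j)$ on dyadic time intervals $|t|\le 2^j$, summing the resulting probabilities (which are summable in $j$ precisely because of the $e^{-cR_j^2}$ factor), and invoking Borel--Cantelli produces the almost sure bound for the truncated flows; a now-routine approximation argument (uniform-in-$N$ control plus convergence $\Phi_N \to \Phi$ on the support of $\rho$) passes it to the true flow and simultaneously re-proves global well-posedness.

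For \eqref{gKdV} the argument is structurally identical: $H_{\mathrm{gKdV}}$ is the real restriction of $H_{\mathrm{NLS}}$, the Gibbs measure is the pushforward of the same Gaussian under the reality constraint $g_{-n}=\overline{g_n}$, the same critical mass threshold and non-$L^p$ density phenomenon hold, and the deterministic local well-posedness theory of mass-critical gKdV on $\T$ provides the needed local existence time $\gtrsim R^{-\theta}$ in $H^s$, $s<\frac12$ — so one only needs to record the parallel version of the truncated-flow approximation (cf.\ Appendix C in \cite{ChapoutoKishimoto}) and the Orlicz density bound, then repeat verbatim.

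I expect the main obstacle to be the first step: proving the precise Orlicz integrability of $\frac{\de\rho}{\de\mu}$ with the right Young function, and matching it to the Orlicz--Hölder estimate so that the loss against the Gaussian tail is only a power of $R^2$ (hence costs only a power of $\log T$, not more). Getting the Young function too weak would degrade the final exponent on $\log(2+T)$ beyond $1$, or break summability in the Borel--Cantelli step; getting it as strong as stated requires exploiting the sharpness of \eqref{GNS} at $p=6$ and the $K=K_0$ analysis of \cite{OST} in a quantitative way. A secondary technical point is ensuring all estimates (local existence time, density bound, measure convergence) are uniform in the truncation parameter $N$, so that the almost sure bound survives the limit.
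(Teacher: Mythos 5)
Your overall architecture --- Bourgain's iteration, replacing the H\"older pairing $L^p$/$L^{p'}$ by an Orlicz--H\"older pairing, then Borel--Cantelli on a sequence of times --- is exactly the paper's strategy. But the crucial step, the choice of Young function, is wrong in a way that breaks the argument. You propose $\Psi(x)=x(\log(2+x))^{-\alpha}$, i.e.\ $\int \frac{\de\rho}{\de\mu}\bigl(\log\bigl(2+\tfrac{\de\rho}{\de\mu}\bigr)\bigr)^{-\alpha}\,\de\mu<\infty$. With the negative exponent this is \emph{weaker} than $L^1$ (it follows trivially from $\rho$ being a finite measure), and since $\Psi(x)/x\to 0$ it is not a Young function, so its conjugate degenerates and no usable H\"older inequality comes out. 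Even if you flip the sign and work in $L(\log L)^{\alpha}$, the conjugate Young function is comparable to $e^{y^{1/\alpha}}$, and the resulting bound on $\big\lVert\mathbbm{1}_{\{\norm{u_0}_{H^s(\T)}\ge M\}}\big\rVert_{\Psi^*(L)(\mu)}$ is only of order $M^{-2\alpha}$ --- polynomial decay, which cannot beat the factor $TM^{\beta}$ from the iteration. Relatedly, your accounting of the loss is not right: if the penalty against the Gaussian tail really were only a power of $R^2$, you would retain $e^{-cR^2}$ decay and could still take $R\sim\sqrt{\log T}$; the degradation to $R\sim\log T$ does not arise from a polynomial prefactor.

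What the paper actually does is take $G(x)=xe^{q|\log x|^{\alpha}}$ with $\alpha=\tfrac12$ and $0<q\ll1$ --- a space strictly between $L^1$ and every $L^p$, $p>1$, but far stronger than $L(\log L)^{\alpha}$. Membership of $\frac{\de\rho}{\de\mu}$ in this space is equivalent to $\int e^{\frac16\norm{u}_{L^6(\T)}^6+q\norm{u}_{L^6(\T)}^{3}}\mathbbm{1}_{\{M(u)\le\norm{Q}_{L^2(\R)}^2\}}\,\de\mu<\infty$, a genuine strengthening of the normalisability theorem of \cite{OST}: one must rerun their decomposition near and far from the soliton manifold and check that the extra weight costs only $e^{\tilde c_{q,\alpha}\delta^{-2\alpha}}$ near the solitons $Q_\delta^\chi$, which is absorbed by the gain $e^{-c\delta^{-1}}$ from their analysis precisely when $\alpha<\tfrac12$, or $\alpha=\tfrac12$ with $q$ small. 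The conjugate then satisfies $F^*(y)\le e^{c(\log y)^{1/\alpha}}$, whence $\big\lVert\mathbbm{1}_{\{\norm{u_0}_{H^s(\T)}\ge M\}}\big\rVert_{F^*(L)(\mu)}\lesssim e^{-cM^{2\alpha}}=e^{-cM}$; it is this stretched-exponential (rather than Gaussian) decay that forces $M\sim\log(2+T)$ and hence the exponent $1$ on the logarithm. Your remaining steps (invariance, local theory giving $\tau\gtrsim M^{-\beta}$, and the gKdV transfer via Fourier--Lebesgue spaces and \cite{ChapoutoKishimoto}) are essentially as in the paper, but without the correct Young function and the accompanying weighted integrability estimate the proof does not close.
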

    \begin{remark}
        In \cite{ChapoutoKishimoto}, Chapouto and Kishimoto used Bourgain's argument and the results from \cite{OST} to prove $\rho$-a.s.\ global well-posedness for \eqref{gKdV} in Fourier-Lebesgue spaces. As a byproduct, we will improve this result by showing $\rho$-a.s.\ growth bounds 
        \[
            \sup_{t \in [-T,T]} \norm{u(t)}_{\mathcal{F}L^{s,p}(\T)} \leq C(s,p,u_0) \log(2+T),
        \]
        where $2 < p < \infty$, $s < 1 - \frac1p$ and $T>0$. See \Cref{gKdVcase} for details.
    \end{remark}
	\begin{remark}
		In the proofs we choose to replace $\mu_0$ by the Gaussian measure $\mu$ with inverse covariance $1 - \Delta$, formally given by 
        \begin{equation}\label{Massive Gaussian}
			\de \mu(u) = \frac{1}{Z} e^{-\frac{1}{2} \int_{\T} \abs{u}^2\, \de x - \frac{1}{2} \int_{\T} \abs{\partial_x u}^2 \, \de x} \, \de u,
		\end{equation}
        which can be seen as the law of the random Fourier series\footnote{Here, $\langle n \rangle \coloneqq (1 + |n|^2)^{\frac12}$.}
        \[
            \sum_{n \in \Z} \frac{g_n}{\langle n \rangle}e^{2\pi i n x}.
        \]
		The focusing Gibbs measure with optimal mass cutoff then reads
		\begin{equation}\label{truncated Gibbs measure}
			\de \rho (u)  = \frac{1}{Z_K} e^{\frac{1}{6} \int_{\T} \abs{u}^6 \, \de x} \mathbbm{1}_{\{M(u) \leq \norm{Q}_{L^2(\R)}^2\}} \, \de \mu(u).
		\end{equation}
		As noted in \cite{OST} and seen in \cite{Bourgain94}, $\mu$ is a more natural base Gaussian measure, due to the lack of conservation of the spatial mean under the dynamics of \eqref{NLS}.
		In either case, all the results from \cite{OST} are valid in both the complex value setting and the real value setting (with restriction $g_{-n} = \overline{g_n}$) and with both the measures \eqref{Massless Gaussian} and \eqref{Massive Gaussian} as base Gaussian measures; see Remark 1.2 in \cite{OST}. Also, all cited results from \cite{ChapoutoKishimoto} work with both base Gaussians as well, see Remark 1.7 in \cite{ChapoutoKishimoto}. Hence, we will only work with the Gibbs measure \eqref{truncated Gibbs measure} in the complex-valued setting. 
	\end{remark}
    The remainder of the paper is organised as follows. In \Cref{Bourgain's strategy}, we demonstrate why Bourgain's strategy is not directly applicable. In \Cref{Orlicz}, we introduce Orlicz spaces and construct a Young function, which will be used in \Cref{Our strategy} to overcome this obstacle. The remainder of the proof, following closely the ideas of \cite{OST}, is carried out in \Cref{Int int}. In \Cref{gKdVcase}, we sketch the proof in the case of \eqref{gKdV}.
    
    \section{Bourgain's strategy and a roadblock} \label{Bourgain's strategy}
    In \cite{Bourgain94}, Bourgain outlined the following argument to replace the use of deterministic conservation laws with measure invariance to obtain logarithmic growth bounds. Using finite-dimensional approximations at which level one has global well-posedness, this yields almost-sure global well-posedness for the full equation. We have the following local well-posedness result for \eqref{NLS}.
	\begin{theorem}[\cite{Bourgain93}, Theorem 1]\label{LWP_NLS}
    	Let $s>0$. Then \eqref{NLS} is locally well-posed in $H^s(\T)$. More precisely, there exists $\beta >0$ such that for all $M \geq 1$ and all $u_0 \in H^s(\T)$ with  $\norm{u_0}_{H^s(\T)} \leq M$ there exists a time $\tau=\tau(M) \gtrsim M^{-\beta}$ and a solution\footnote{Uniqueness holds in a suitable $X^{s,b}$ space.} $u \in C([-\tau,\tau]; H^s(\T))$ to \eqref{NLS} with $u(0) = u_0$, and $\sup_{t \in [-\tau,\tau]} \norm{u(t)}_{H^s(\T)} \leq 2M$.
        Moreover, the solution $u$ depends continuously on the initial data $u_0$.
	\end{theorem}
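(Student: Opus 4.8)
The plan is to solve \eqref{NLS} by a contraction mapping argument in Bourgain's restriction spaces; I take $p = 6$, so that the nonlinearity is $\mathcal{N}(u) \coloneqq \abs{u}^{4}u$ (the general power $p > 2$ is analogous, see \cite{Bourgain93}). Rewriting the equation in Duhamel form,
\[
    u(t) = e^{it\partial_x^{2}}u_0 + i\int_{0}^{t} e^{i(t-t')\partial_x^{2}}\,\mathcal{N}(u)(t')\,\de t',
\]
one cannot simply iterate this in $C([-\tau,\tau];H^{s}(\T))$ unless $s > \frac12$ (where $H^{s}(\T)$ is an algebra); to reach all $s > 0$ --- which includes the range $s < \frac12$ relevant to the Gibbs measure --- one must exploit the dispersion of the linear flow. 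I would therefore run the fixed point for the Duhamel map in $X^{s,b}_{\tau}$, the restriction to $\{\abs{t}\le\tau\}$ of the Bourgain space adapted to the propagator $e^{it\partial_x^{2}}$, with norm $\norm{u}_{X^{s,b}} = \norm{\langle n\rangle^{s}\langle\sigma + 4\pi^{2}n^{2}\rangle^{b}\,\widehat{u}(n,\sigma)}_{\ell^{2}_{n}L^{2}_{\sigma}}$ and $b = \frac12 + \epsilon_0$ for a small fixed $\epsilon_0 > 0$. Three ingredients are needed: (i) the homogeneous estimate $\norm{e^{it\partial_x^{2}}u_0}_{X^{s,b}_{\tau}}\lesssim\norm{u_0}_{H^{s}(\T)}$; (ii) the inhomogeneous estimate $\norm{\int_{0}^{t} e^{i(t-t')\partial_x^{2}}F\,\de t'}_{X^{s,b}_{\tau}}\lesssim\tau^{\theta}\norm{F}_{X^{s,b-1}_{\tau}}$ for some $\theta > 0$, which comes from the standard time-localisation lemma for $X^{s,b}$ spaces; and (iii) the quintic multilinear estimate $\norm{\mathcal{N}(u)}_{X^{s,b-1}}\lesssim\norm{u}_{X^{s,b}}^{5}$.

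Estimate (iii) is the crux. After a Littlewood--Paley decomposition, exploiting that the output frequency $n_0 = n_1 + \dots + n_5$ is the sum of the input frequencies so that $\langle n_0\rangle^{s}\lesssim\prod_{j}\langle n_j\rangle^{s}$ for $s\ge 0$, and dualising the negative temporal weight $b - 1 = -\frac12 + \epsilon_0$, it reduces to space-time Strichartz inequalities on $\T\times\T$ for the linear propagator: the $L^{4}_{x,t}$ estimate, which holds without loss, and --- decisively --- Bourgain's $L^{6}_{x,t}$ estimate $\norm{e^{it\partial_x^{2}}P_{N}f}_{L^{6}_{x,t}(\T\times\T)}\lesssim_{\epsilon} N^{\epsilon}\norm{f}_{L^{2}(\T)}$, transferred to the $X^{0,b}$ scale. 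The $N^{\epsilon}$-loss in the latter (which is essentially sharp, being governed by the number of lattice points on a parabolic arc) can be summed over dyadic frequency scales only at the price of a positive power of $\langle n\rangle$; this is exactly what forces the restriction $s > 0$, with $s$ allowed to be arbitrarily small.

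Granting (i)--(iii), I would close the contraction on the ball of radius $R \coloneqq 2C_0 M$ in $X^{s,b}_{\tau}$, where $C_0$ is the constant from (i): the Duhamel map is then a self-map of this ball and a contraction on it as soon as $C_1\,\tau^{\theta}R^{4}\le\frac12$, i.e.\ for $\tau = \tau(M)\gtrsim M^{-\beta}$ with $\beta = 4/\theta$ (equal to $(p-2)/\theta$ in general), which gives the asserted lower bound on the lifespan. Since $b > \frac12$ we have the embedding $X^{s,b}_{\tau}\hookrightarrow C([-\tau,\tau];H^{s}(\T))$, so the fixed point $u$ is the desired solution; combining the triangle inequality in the Duhamel formula with $\norm{e^{it\partial_x^{2}}u_0}_{C_t H^{s}} = \norm{u_0}_{H^{s}(\T)}\le M$ and the bound $\norm{\int_{0}^{t}e^{i(t-t')\partial_x^{2}}\mathcal{N}(u)\,\de t'}_{C_t H^{s}}\lesssim\tau^{\theta}R^{5}$, which is $\le M$ for $\tau$ as above, one obtains $\sup_{\abs{t}\le\tau}\norm{u(t)}_{H^{s}(\T)}\le 2M$. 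Uniqueness in $X^{s,b}_{\tau}$ and Lipschitz continuous dependence of $u$ on $u_0$ then follow by applying (i)--(iii) to the difference of two solutions, using the multilinearity of $\mathcal{N}$. The only genuinely nontrivial input is the periodic $L^{6}_{x,t}$ Strichartz estimate behind (iii); this, together with the bookkeeping of the multilinear estimate, is carried out in \cite{Bourgain93}, to which I refer for details.
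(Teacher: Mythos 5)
Your proposal is correct and follows essentially the same route as the source: the paper does not prove this statement itself but quotes it from \cite{Bourgain93}, whose argument is precisely the $X^{s,b}$ contraction you describe, resting on the periodic $L^4_{x,t}$ and $L^6_{x,t}$ Strichartz estimates whose $N^\epsilon$ loss is what forces $s>0$. The only bookkeeping caveat --- that the $\tau^\theta$ gain in the inhomogeneous estimate requires a little slack in the modulation exponent of the multilinear estimate, so (iii) should be proved with $b-1$ replaced by some $b'-1<b-1$ --- is standard and is handled in \cite{Bourgain93}.
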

    Assume that we are given an invariant measure $\nu$ for \eqref{NLS} and assume that $\nu$ is absolutely continuous with respect to $\mu$ with Radon-Nikodym derivative $\frac{\de\nu}{\de\mu}$ in $L^p(\mu)$ for some $p>1$. We can then run Bourgain's argument as follows: fixing $M\geq 1$ we have, by the local well-posedness \Cref{LWP_NLS}, the invariance of $\nu$ under the \eqref{NLS} flow, and Hölder's inequality,
    \begin{align*}
        \nu\mleft( \sup_{t \in [-T,T]} \norm{u(t)}_{H^s(\T)} \geq 2M \mright)
        &\leq
        \nu\mleft( \sup_{|j| \leq \mleft\lfloor \frac{T}{\tau(M)} \mright\rfloor} \norm{u(j \cdot \tau(M))}_{H^s(\T)} \geq M \mright)
        \\
        &\leq
        \sum_{|j| \leq \mleft\lfloor \frac{T}{\tau(M)} \mright\rfloor} \nu( \norm{u(j \cdot \tau(M))}_{H^s(\T)} \geq M)
        \\
        &=
        \left( 1 + 2\mleft\lfloor \frac{T}{\tau(M)} \mright\rfloor \right) \nu(\norm{u_0}_{H^s(\T)} \geq M)
        \\
        &\leq
        \left( 1 + 2\mleft\lfloor \frac{T}{\tau(M)} \mright\rfloor \right) \norm{\frac{\de\nu}{\de\mu}}_{L^p(\mu)} \mu(\norm{u_0}_{H^s(\T)} \geq M)^{\frac{1}{p'}}
    \end{align*}
    for any $s < \frac{1}{2}$. Now, by a Gaussian tail estimate and again the local well-posedness \Cref{LWP_NLS}, we obtain
    \[
        \nu\mleft( \sup_{t \in [-T,T]} \norm{u(t)}_{H^s(\T)} \geq 2M \mright) \lesssim (1 + 2T M^{\beta}) \norm{\frac{\de\nu}{\de\mu}}_{L^p(\mu)} e^{- \frac{M^2}{4p'}}.  
    \]
    Using the Borel-Cantelli lemma, one can now prove the following $\nu$-a.s.\ bound:
    \[
        \sup_{t \in [-T,T]} \norm{u(t)}_{H^s(\T)} \lesssim_{s, u_0} \sqrt{\log(2+T)}.
    \]
    We omit this argument and present an analogous one in our case of interest. \medskip

    Naturally, we would like to adapt Bourgain's approach to the setting $\nu = \rho$, with $\rho$ as in \Cref{Introduction} (with $p=6$ and $K = \norm{Q}_{L^2(\T)}^2$). However, at this mass threshold, we have $\frac{\de\rho}{\de\mu}\not\in L^p(\mu)$ for any $p>1$ (see \Cref{density is not in Lp}) and we cannot apply Hölder's inequality in the above argument. Our approach will be to show an intermediate integrability; see \eqref{goal1}.
    \begin{proposition} \label{density is not in Lp}
        Let $p>1$. Then
        \[
        \int_{\mathcal{D}'(\T)} e^{\frac{p}{6}\int_{\T} \abs{u}^6\,\de x}\mathbbm{1}_{\{M(u)\le\norm{Q}_{L^2(\R)}^2\}}\, \de\mu(u)=\infty.
        \]
    \end{proposition}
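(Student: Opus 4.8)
The plan is to run the classical soliton--concentration argument of Lebowitz--Rose--Speer \cite{LRS}: we exhibit a region of $\D'(\T)$ on which $e^{\frac p6\int_\T\abs u^6\,\de x}$ is as large as we like, while still carrying non-negligible $\mu$-mass and lying inside the cutoff set $\{M(u)\le\norm{Q}_{L^2(\R)}^2\}$. Fix the Gagliardo--Nirenberg optimiser $Q$, and for large $\lambda$ let $\psi_\lambda$ be the profile on $\T$ obtained by periodising $\lambda^{1/2}Q(\lambda\,\cdot)$ against a fixed smooth bump supported in a small interval (so that there is no overlap issue). By the exponential decay of $Q$,
\[
\norm{\psi_\lambda}_{L^2(\T)}^2 = \norm{Q}_{L^2(\R)}^2 + O(e^{-c\lambda}), \qquad \norm{\partial_x\psi_\lambda}_{L^2(\T)}^2 = \lambda^2\norm{\partial_x Q}_{L^2(\R)}^2\bigl(1+O(e^{-c\lambda})\bigr),
\]
\[
\norm{\psi_\lambda}_{L^6(\T)}^6 = \lambda^2\norm{Q}_{L^6(\R)}^6\bigl(1+O(e^{-c\lambda})\bigr) = 3\lambda^2\norm{\partial_xQ}_{L^2(\R)}^2\bigl(1+O(e^{-c\lambda})\bigr),
\]
the last identity being the normalisation $\norm{Q}_{L^6(\R)}^6 = 3\norm{\partial_xQ}_{L^2(\R)}^2$. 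Thus $\psi_\lambda$ essentially saturates \eqref{GNS} at the critical mass: its potential and kinetic energies are comparable, with ratio exactly $3$.

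Fix $s<\tfrac12$ close enough to $\tfrac12$ that $H^s(\T)\hookrightarrow L^2(\T)\cap L^6(\T)$, write $C_s$ for the corresponding embedding constants, and let $\delta>0$ be small. Since $\mu$ is a centred Gaussian on the separable space $H^s(\T)$, every ball has positive mass, so $c_\delta:=\mu(\norm{w}_{H^s(\T)}<\delta)>0$. Set $a=a(\delta)=1-C_0\delta$ with $C_0$ chosen large enough that $\norm{u-a\psi_\lambda}_{H^s(\T)}<\delta$ forces $M(u)\le(a\norm{\psi_\lambda}_{L^2(\T)}+C_s\delta)^2\le\norm{Q}_{L^2(\R)}^2$ for $\lambda$ large, and, crucially, \emph{also} small enough that $a^4>1/p$ --- which is possible \emph{precisely because $p>1$}. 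With $A_\lambda:=\{u:\norm{u-a\psi_\lambda}_{H^s(\T)}<\delta\}\subset\{M(u)\le\norm{Q}_{L^2(\R)}^2\}$ we then bound from below
\[
\int_{\D'(\T)} e^{\frac p6\int_\T\abs u^6\,\de x}\mathbbm 1_{\{M(u)\le\norm{Q}_{L^2(\R)}^2\}}\,\de\mu(u)\;\ge\;\int_{A_\lambda} e^{\frac p6\int_\T\abs u^6\,\de x}\,\de\mu(u).
\]

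Since $a\psi_\lambda\in H^1(\T)$, the Cameron--Martin space of $\mu$, we translate $u=a\psi_\lambda+w$ and apply the Cameron--Martin formula $\tfrac{\de\mu(\cdot+a\psi_\lambda)}{\de\mu}(w)=e^{\Rea\langle w,a\psi_\lambda\rangle_{H^1(\T)}-\frac12\norm{a\psi_\lambda}_{H^1(\T)}^2}$ (up to an irrelevant constant in the linear term), then restrict to the half-space $\{\Rea\langle w,a\psi_\lambda\rangle_{H^1(\T)}\ge0\}$, on which the linear factor is $\ge1$; on $\{\norm{w}_{H^s(\T)}<\delta\}$ we have $\int_\T\abs{a\psi_\lambda+w}^6\,\de x\ge(\norm{a\psi_\lambda}_{L^6(\T)}-C_s\delta)^6$ for $\lambda$ large; and by the symmetry $w\mapsto-w$ the remaining Gaussian mass is $\ge\tfrac12 c_\delta$. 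This yields
\[
\int_{A_\lambda} e^{\frac p6\int_\T\abs u^6\,\de x}\,\de\mu(u)\;\ge\;\frac{c_\delta}{2}\,\exp\mleft(\frac p6\bigl(\norm{a\psi_\lambda}_{L^6(\T)}-C_s\delta\bigr)^6-\frac12\norm{a\psi_\lambda}_{H^1(\T)}^2\mright).
\]
Inserting the norm asymptotics (and using $\norm{a\psi_\lambda}_{L^6(\T)}\to\infty$ to absorb the $C_s\delta$ shift), the exponent equals
\[
\lambda^2\norm{\partial_xQ}_{L^2(\R)}^2\mleft(\frac{p a^6}{2}-\frac{a^2}{2}\mright)\bigl(1+o_\lambda(1)\bigr)-O(1),
\]
which tends to $+\infty$ as $\lambda\to\infty$ because $a^4>1/p$ makes $\tfrac{pa^6}{2}-\tfrac{a^2}{2}>0$. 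Since the left-hand side is independent of $\lambda$, it must be $+\infty$, which is the claim.

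The one genuinely delicate point is the bookkeeping at the critical threshold: one must shrink the profile by the factor $a<1$ and fit the $\delta$-perturbation into both the mass budget $\{M(u)\le\norm{Q}_{L^2(\R)}^2\}$ and the $L^6$ lower bound, all without destroying positivity of the coefficient $\tfrac{pa^6-a^2}{2}$. That positivity survives exactly because $p>1$ --- for $p\le1$ the argument correctly breaks down, consistent with the normalisability of $\rho$ at $K=\norm{Q}_{L^2(\R)}^2$ established in \cite{OST}.
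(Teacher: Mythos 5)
Your proof is correct and follows the same basic strategy as the paper's: translate $\mu$ by a slightly shrunk, rescaled, periodised soliton via Cameron--Martin, and exploit that for $p>1$ the potential term $\tfrac{p}{6}\cdot 3a^6$ strictly beats the kinetic term $\tfrac12 a^2$ at the GNS-critical balance once $a^4>1/p$. The one place you genuinely diverge is the treatment of the Gaussian fluctuation. The paper restricts only to a small $L^2$-ball (just enough to enforce the mass cutoff), expands $|v-\widetilde{Q}_\delta^\chi|^6$ into $(\widetilde{Q}_\delta^\chi)^6$ plus a nonnegative part plus a part odd in $v$, and then eliminates both the odd part and the Cameron--Martin linear term in one stroke via Jensen's inequality and the symmetry of $\mu$. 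You instead restrict to a small $H^s$-ball with $s\in[\tfrac13,\tfrac12)$ so that $H^s(\T)\hookrightarrow L^6(\T)$ yields a deterministic lower bound on the $L^6$ norm of the perturbed profile, dispose of the Cameron--Martin linear term by a half-space restriction plus the symmetry $w\mapsto -w$, and invoke positivity of Gaussian small-ball probabilities. Both implementations are sound; yours is more elementary in that it avoids the algebraic decomposition and Jensen, at the cost of needing the Sobolev embedding into $L^6$ and the small-ball input. One presentational nit: the quantifiers around $a=1-C_0\delta$ should read ``first fix $C_0$ large (depending on the embedding constants and $\norm{Q}_{L^2(\R)}$), then take $\delta$ small enough that $(1-C_0\delta)^4>1/p$''; as written, ``$C_0$ chosen large enough \dots and also small enough'' is self-contradictory, though the intended (and valid) logic is clear.
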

    \begin{proof} 
        Identifying $\T$ with $[-\frac12,\frac12)$, for some small $\varepsilon > 0$ and $\delta>0$ to be chosen later, take
        \[
            Q_\delta^\chi(x) \coloneqq \chi(x)\delta^{-\frac12}Q(\delta^{-1}x), \qquad \widetilde{Q}_\delta^\chi(x) \coloneqq (1-\varepsilon)Q_\delta^\chi(x),
        \]
        where $\chi \colon \R \to [0,1]$ is a radially-decreasing smooth cutoff function with support in $[-\frac14,\frac14]$ and equal to $1$ on $[-\frac18,\frac18]$.
        We have the following elementary estimates using smoothness and exponential decay of $Q$ and $\partial_x Q$ (see Proposition B.7 in \cite{Tao}).
        \begin{lemma} \label{QM estimates}
            Let $0 < \delta \ll 1$. Then, for any
            $r \geq 1$ and any $c>0$, we have, uniformly in $0 < \delta \ll 1$,
            \begin{align}
                &\big\lVert\widetilde{Q}_\delta^\chi\big\rVert_{L^r(\T)}^r =(1-\varepsilon)^r \norm{Q}_{L^r(\R)}^r\delta^{\frac{2-r}2} + \mathcal{O}_{c,\chi}(\delta^{c}); 
                \\
                &\big\lVert\partial_x \widetilde{Q}_\delta^\chi\big\rVert_{L^2(\T)}^2 \le (1-\varepsilon)^2(1+\varepsilon)\norm{\partial_x Q}_{L^2(\R)}^2\delta^{-2} + \mathcal{O}_{\chi,\varepsilon}(1). 
            \end{align}
        \end{lemma}
        \noindent Let $\tau_w \colon \mathcal{D}'(\T) \to \mathcal{D}'(\T)$ be the translation operator $u\mapsto u-w$. By the Cameron-Martin formula, we have the following\footnote{Here and throughout the paper we view $H^s(\T)$ as a real vector space with inner product given by $\innerprod{f}{g}_{H^s(\T)} = \Rea \int_{\T} (1 - \Delta)^s f(x) \overline{g(x)} \, \de x$.}
        \begin{align*}
            \int_{\mathcal{D}'(\T)} & e^{\frac p6\int_{\T} \abs{u}^6\, \de x}\mathbbm{1}_{\{M(u)\le\norm{Q}_{L^2(\R)}^2\}}\, \de\mu(u) 
            \\
            &= 
            \int_{\mathcal{D}'(\T)} e^{\frac p6\lVert v-\widetilde{Q}_\delta^\chi\rVert_{L^6(\T)}^6} \mathbbm{1}_{\{M(v-\widetilde{Q}_\delta^\chi)\le\norm{Q}_{L^2(\R)}^2\}}\, \de\mu\circ\tau_{-\widetilde{Q}_\delta^\chi}(v) 
            \\
            &= 
            \int_{\mathcal{D}'(\T)} e^{\frac p6\lVert v-\widetilde{Q}_\delta^\chi\rVert_{L^6(\T)}^6 - \innerprod{\widetilde{Q}_\delta^\chi}{v}_{H^1(\T)}-\frac12\lVert\widetilde{Q}_\delta^\chi\rVert_{H^1(\T)}^2} \mathbbm{1}_{\{M(v-\widetilde{Q}_\delta^\chi)\le\norm{Q}_{L^2(\R)}^2\}}\, \de\mu(v).
        \end{align*}
        Using \Cref{QM estimates}, if $\norm{v}_{L^2(\T)}\le\frac\varepsilon2\norm{Q}_{L^2(\R)}$ and $0 < \delta \ll 1$ is sufficiently small depending on $\varepsilon$, then $\big\lVert v-\widetilde{Q}_\delta^\chi\big\rVert_{L^2(\T)}\le\norm{Q}_{L^2(\R)}$.
        This implies
        \begin{align*}
            \int_{\mathcal{D}'(\T)} &e^{\frac p6\int_{\T}\abs{u}^6\, \de x}\mathbbm{1}_{\{M(u)\le\norm{Q}_{L^2(\R)}^2\}}\, \de\mu(u) 
            \\
            &\ge
            \int_{\mathcal{D}'(\T)} e^{\frac p6\lVert v-\widetilde{Q}_\delta^\chi\rVert_{L^6(\T)}^6-\innerprod{\widetilde{Q}_\delta^\chi}{v}_{H^1(\T)}-\frac12\lVert\widetilde{Q}_\delta^\chi\rVert_{H^1(\T)}^2} \mathbbm{1}_{\{\norm{v}_{L^2(\T)}\le\frac\varepsilon2\norm{Q}_{L^2(\R)}\}}\, \de\mu(v).
        \end{align*}
        We can compute
        \begin{align*}
            \int_{\mathbb{T}} |v-\widetilde{Q}_\delta^\chi|^6 \, \de x -\innerprod{\widetilde{Q}_\delta^\chi}{v}_{H^1(\T)} 
            &= \int_{\mathbb{T}} ( |v|^2 - 2 \Re(v) \widetilde{Q}_\delta^\chi + (\widetilde{Q}_\delta^\chi)^2)^3 \, \de x  -\innerprod{\widetilde{Q}_\delta^\chi}{v}_{H^1(\T)}
            \\
            &= \big \lVert\widetilde{Q}_\delta^\chi \big \rVert_{L^6(\T)}^6 + P(v,\widetilde{Q}_\delta^\chi) - R(v,\widetilde{Q}_\delta^\chi),
        \end{align*}
        where $P(v,\widetilde{Q}_\delta^\chi) \geq 0$ and $R$ is odd in $v$. Hence, moving the indicator function inside the integral and using Jensen's inequality, we have
        \begin{align*}
            \int_{\mathcal{D}'(\T)} &e^{\frac p6\int_{\T}\abs{u}^6\, \de x} \mathbbm{1}_{\{M(u)\le\norm{Q}_{L^2(\R)}^2\}}\, \de\mu(u) 
            \\
            &\ge 
            \int_{\mathcal{D}'(\T)} e^{\mleft(\frac p6\lVert\widetilde{Q}_\delta^\chi\rVert_{L^6(\T)}^6-\frac12\lVert\widetilde{Q}_\delta^\chi\rVert_{H^1(\T)}^2-R(v,\widetilde{Q}_\delta^\chi)\mright)\mathbbm{1}_{\{\norm{v}_{L^2(\T)}\le\frac\varepsilon2\norm{Q}_{L^2(\R)}\}}} \, \de\mu(v)-1 
            \\
            &\ge 
            e^{\int_{\mathcal{D}'(\T)} \mleft(\frac p6\lVert\widetilde{Q}_\delta^\chi\rVert_{L^6(\T)}^6-\frac12\lVert\widetilde{Q}_\delta^\chi\rVert_{H^1(\T)}^2-R(v,\widetilde{Q}_\delta^\chi)\mright)\mathbbm{1}_{\{\norm{v}_{L^2(\T)}\le\frac\varepsilon2\norm{Q}_{L^2(\R)}\}}\, \de\mu(v)}-1 
            \\
            &\ge
            e^{\varepsilon_1\mleft(\frac p6\lVert\widetilde{Q}_\delta^\chi\rVert_{L^6(\T)}^6-\frac12\lVert\widetilde{Q}_\delta^\chi\rVert_{H^1(\T)}^2\mright)}-1,
        \end{align*}
        where in the last line, we used the fact that $v\mapsto R(v,\widetilde{Q}_\delta^\chi)$ is odd and $\mu$ is symmetric about $0$, and that the event $\{ \norm{v}_{L^2(\T)} \le \frac\varepsilon2\norm{Q}_{L^2(\R)} \}$ has some positive probability $\varepsilon_1$. Appealing to \Cref{QM estimates}, we finally obtain
        \begin{align*}
            \int_{\mathcal{D}'(\T)}& e^{\frac p6\int_{\T}\abs{u}^6\, \de x} \mathbbm{1}_{\{M(u)\le\norm{Q}_{L^2(\R)}^2\}}\, \de\mu(u) 
            \\
            &\ge
            e^{\varepsilon_1\mleft(\frac p6(1-\varepsilon)^6\norm{Q}_{L^6(\R)}^6\delta^{-2}-\frac12 (1- \varepsilon)^2 (1+\varepsilon)\norm{\partial_x Q}_{L^2(\R)}^2\delta^{-2}-C_\varepsilon\mright)} 
            \\
            &\geq
            e^{\varepsilon_1\mleft(\frac{1}{2} \norm{\partial_x Q}_{L^2(\R)}^2 (p(1-\varepsilon)^6 - (1- \varepsilon)^2 (1+\varepsilon) )\delta^{-2}-C_\varepsilon\mright)},
        \end{align*}
        and, recalling $p>1$, we can pick $\varepsilon$ sufficiently small to complete the proof by taking $\delta\to 0$. 
    \end{proof}
    We will circumvent the lack of $L^p$-integrability of $\frac{\de\rho}{\de\mu}$ by proving that this density lies in some space of integrability stronger than $L^1$ but weaker than $L^p$, $p>1$, and then using a suitable substitute for Hölder's inequality. This gain in integrability will suffice to ensure bounds of the form
    \[
        \sup_{t \in [-T,T]} \norm{u(t)}_{H^s(\T)} \lesssim_{s, u_0} \log(2+T).
    \]
    The space we use will be an Orlicz space with a suitable Young function, introduced in the next section.
    \section{Orlicz spaces and the construction of a Young function} \label{Orlicz}
    This section partly follows \cite{GrafakosII}, and we omit elementary proofs. Let $(\Omega,\mathcal{F},\nu)$ be a $\sigma$-finite measure space.
    \begin{definition}
        We call a function $\Psi \colon \R \to [0,\infty)$ a \emph{Young function} if it is an even, continuous, increasing on $[0,\infty)$, convex function with $\Psi(0)=0$ and $\lim_{x\to\infty}\frac{\Psi(x)}{x}=\infty$.
    \end{definition}
    We recall that the Legendre transform of $\Psi$ at $t \in \R$ is defined by
    \[
        \Psi^*(t) \coloneqq \sup_{s \in \R} st - \Psi(s) \in [0,\infty),
    \]
    where the final condition in the above definition ensures the finiteness of $\Psi^*$. It is easy to see that $\Psi$ is a Young function if and only if $\Psi^*$ is a Young function. 
    
    Next, we define the so-called the \emph{Orlicz space}.
    \begin{definition}
        Let $\Psi$ be a Young function. For a measurable function $f \colon \Omega \to \R$ we define
        \[
            \norm{f}_{\Psi(L)} \coloneqq \inf \left\{ c>0 : \int_\Omega \Psi\mleft(\frac{f(\omega)}{c}\mright) \, \de \nu(\omega) \leq 1 \right\},
        \]
        with the convention that $\inf \emptyset \coloneqq \infty$. 
        Furthermore, we define the \emph{Orlicz space}
        \[
            \Psi(L) \coloneqq \Psi(L)(\Omega, \mathcal{F}, \nu) \coloneqq \left\{ f \text{ meas.} : \exists\, a > 0 \text{ s.t.} \int_\Omega \Psi(a f(\omega)) \, \de \nu(\omega) < \infty  \right\}.
        \]
    \end{definition}
    \noindent One can check that after identifying functions that are equal $\mu$-almost everywhere, $\norm{\cdot}_{\Psi(L)}$ defines a norm. Note that $\Psi \colon x\mapsto |x|^p$ returns the spaces $\Psi(L)=L^p$.
    Importantly, Orlicz spaces enjoy a Hölder-type inequality: for all $f \in \Psi(L)$ and all $g \in \Psi^*(L)$ we have
        \begin{equation}\label{Orlicz Holder}
            \int_\Omega \abs{fg} \, \de \nu \leq 2 \norm{f}_{\Psi(L)} \norm{g}_{\Psi^*(L)}.
        \end{equation}
    The ``intermediate integrability'' mentioned at the end of \Cref{Bourgain's strategy} will, in fact, follow from the construction of a suitable Young function. In particular, we set 
    \begin{equation} \label{F defn}
        F(x) = \widetilde{F}(|x|), \qquad \widetilde{F} = (G - G(e^{1-\alpha}))_+, \qquad G(x) = x e^{q|\log x|^\alpha}
    \end{equation}
    for $x>0$, where $q > 0$ and $0 < \alpha < 1$ are chosen later, and $F(0) \coloneqq 0$.
    \begin{lemma}\label{F is Young}
        The function $F \colon \R \to [0,\infty)$ defined above is a Young function.
    \end{lemma}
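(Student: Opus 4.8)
The plan is to verify each condition in the definition of a Young function for $F$ defined in \eqref{F defn}, working with the function $\widetilde F = (G - G(e^{1-\alpha}))_+$ on $(0,\infty)$ and then transferring to $F$ on all of $\R$. Since $F(x) = \widetilde F(|x|)$, evenness is immediate, and continuity at $0$ follows from the standing convention $F(0) = 0$ together with $\widetilde F(x) \to 0$ as $x \to 0^+$ (indeed $G(x) \to 0$ as $x \to 0^+$, since $x e^{q|\log x|^\alpha} \to 0$ because $|\log x|^\alpha = o(|\log x|)$ for $0<\alpha<1$; one also checks $G(e^{1-\alpha}) > 0$ so the positive part truncates a nontrivial piece near the origin). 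The value $F(0)=0$ is built in. Continuity on $(0,\infty)$ is clear since $G$ is smooth there.

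The heart of the argument is to show that $G$ is increasing and convex on the interval $[e^{1-\alpha}, \infty)$ — equivalently, that $\widetilde F$, being $(G - c)_+$ with $c = G(e^{1-\alpha})$ and $G$ monotone with $G(e^{1-\alpha}) = c$, inherits monotonicity and convexity (the positive part of a convex increasing function that vanishes at the left endpoint of the region where it is positive remains convex: on $[0, e^{1-\alpha}]$ it is identically $0$, on $[e^{1-\alpha},\infty)$ it equals the convex increasing $G - c$, and the two pieces match in value and in (one-sided) derivative at $e^{1-\alpha}$ provided $G'(e^{1-\alpha}) \ge 0$). I would compute $G'$ and $G''$ directly. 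Writing $G(x) = x e^{q(\log x)^\alpha}$ for $x \ge 1$ (note $\log x \ge 0$ there, so $|\log x| = \log x$), set $\phi(x) = \log G(x) = \log x + q(\log x)^\alpha$. Then $G'/G = \phi' = \frac{1}{x}\bigl(1 + \alpha q (\log x)^{\alpha-1}\bigr)$, which is positive for $x \ge 1$ and all $q \ge 0$ (and $e^{1-\alpha} \ge 1$ since $0<\alpha<1$), giving monotonicity. For convexity, $G'' = G\bigl((\phi')^2 + \phi''\bigr)$, so it suffices that $(\phi')^2 + \phi'' \ge 0$ on $[e^{1-\alpha},\infty)$. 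A short computation gives $\phi''(x) = -\frac{1}{x^2}\bigl(1 + \alpha q (\log x)^{\alpha-1}\bigr) + \frac{1}{x^2}\alpha(\alpha-1)q(\log x)^{\alpha-2}$, and $(\phi'(x))^2 = \frac{1}{x^2}\bigl(1 + \alpha q(\log x)^{\alpha-1}\bigr)^2$; since the leading term of $(\phi')^2$ is $\frac{1}{x^2}$ and it carries the square, one sees $(\phi')^2 + \phi'' \ge \frac{1}{x^2}\bigl[(1 + \alpha q(\log x)^{\alpha-1})^2 - (1 + \alpha q(\log x)^{\alpha-1}) - \alpha(1-\alpha)q(\log x)^{\alpha-2}\bigr]$, which is nonnegative as soon as $\log x \ge 1 - \alpha$, i.e. $x \ge e^{1-\alpha}$, because then $\alpha q(\log x)^{\alpha - 1} \ge 0$ makes the first bracket term $t^2 - t$ with $t = 1 + \alpha q(\log x)^{\alpha-1} \ge 1$ so $t^2 - t \ge 0$, and the subtracted term is controlled. (This is the point where the specific shift $e^{1-\alpha}$ is chosen: it is exactly the threshold past which the curvature computation closes.)

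Finally, the superlinear growth condition $\lim_{x\to\infty} F(x)/x = \infty$ follows since for large $x$, $F(x)/x \sim G(x)/x = e^{q(\log x)^\alpha} \to \infty$ as $x \to \infty$ (as $q > 0$ and $(\log x)^\alpha \to \infty$). Assembling: $F$ is even, continuous, equals $0$ at $0$, is increasing (being $0$ then increasing), convex (piecewise convex with matching derivatives at the join, as above, and even, so convex across $0$ as well since $\widetilde F$ is increasing on $(0,\infty)$), and superlinear — hence a Young function.

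\emph{Main obstacle.} The only genuinely delicate step is the convexity of $G$ on $[e^{1-\alpha},\infty)$: one must handle the sign of $(\phi')^2 + \phi''$ carefully, keeping track of the term $\alpha(\alpha-1)q(\log x)^{\alpha-2}$ which is negative, and verify that the choice of left endpoint $e^{1-\alpha}$ is precisely what guarantees the inequality uniformly in $q \ge 0$. Everything else is routine.
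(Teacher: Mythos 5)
Your proof is correct and follows essentially the same route as the paper's: both reduce the matter to showing $G''\ge 0$ on $[e^{1-\alpha},\infty)$, and your computation via $\phi=\log G$ and $(\phi')^2+\phi''$ is a repackaging of the paper's direct formula for $G''$, with the same observation that $x\ge e^{1-\alpha}$ is precisely the threshold at which the negative term $\alpha(\alpha-1)q(\log x)^{\alpha-2}$ is absorbed (indeed $t(t-1)\ge t-1=\alpha q(\log x)^{\alpha-1}\ge \alpha(1-\alpha)q(\log x)^{\alpha-2}$ exactly when $\log x\ge 1-\alpha$, which closes the step you leave as ``the subtracted term is controlled''). You are more explicit than the paper about the routine points (continuity at $0$, convexity of the positive-part gluing, and passing from $\widetilde F$ on $[0,\infty)$ to $F$ on $\R$ via monotonicity); the only caveat --- shared with the paper's proof, which also implicitly assumes it --- is that your claim that $\widetilde F$ vanishes identically on $[0,e^{1-\alpha}]$ requires $\sup_{0<x<1}G(x)\le G(e^{1-\alpha})$, i.e.\ that $q$ not be too large relative to $\alpha$, which holds in the regime $\alpha=\tfrac12$, $0<q\ll1$ in which the lemma is ultimately applied.
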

    \begin{proof}
        It is easy to see that $F(0) = 0$, $F$ is even, continuous, increasing on $[0,\infty)$ and $\lim_{x \to \infty} \frac{F(x)}{x} = \infty$. To check that $F$ is convex we compute for $x>1$
        \begin{equation*}
            G'(x)
            =
            e^{q (\log x)^\alpha}\left(1 + q \alpha (\log x)^{\alpha-1}\right)
        \end{equation*}
        and for $x \geq e^{1- \alpha}$
        \begin{align*}
            G''(x)
            &=
            e^{q (\log x)^\alpha} q \alpha  \frac{(\log x)^{\alpha-2}}{x} 
            \left(
            \log x \left(1 + q \alpha (\log x)^{\alpha-1}\right) + \alpha - 1 \right)
            \\
            &\geq
            e^{q (\log x)^\alpha } q \alpha  \frac{(\log x)^{\alpha-2}}{x}
            q \alpha (1- \alpha)^\alpha > 0.
        \end{align*}
        This shows that $G$ is convex on $(e^{1- \alpha}, \infty)$ and hence that $F$ is convex on $\R$.
    \end{proof}
	\section{Our strategy} \label{Our strategy}
	Following Bourgain, we have
	\[
	   \rho
       \mleft(
        \sup_{t\in[-T,T]}\norm{u(t)}_{H^s(\T)}\ge 2M
       \mright) 
       \leq
       (1 + 2T M^{\beta}) 
        \int_{\mathcal{D}'(\T)} \frac{\de\rho}{\de\mu}(u_0) \mathbbm{1}_{\{\norm{u_0}_{H^s(\T)}\ge M\}}\, \de\mu(u_0).
	\]
	Let $F$ be as in \eqref{F defn}. The following propositions are the crux of our argument.
    \begin{proposition} \label{better integrability}
    	Let $q>0$ and $\alpha\in(0,\frac{1}{2})$ or $\alpha=\frac{1}{2}$ and $0<q\ll1$ sufficiently small. Then, choosing $F$ as in \eqref{F defn}, we have
    	\[
    	   \norm{\frac{\de\rho}{\de\mu}}_{F(L)(\mu)}<\infty.
    	\]
	\end{proposition}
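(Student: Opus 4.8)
The plan is to reduce the statement to a single ``intermediate integrability'' bound for the density $\tfrac{\de\rho}{\de\mu}$ — with an explicit subexponential weight — and then to obtain that bound by making quantitative the normalisability argument of \cite{OST}. Throughout, write $f \coloneqq \tfrac{\de\rho}{\de\mu} = \tfrac{1}{Z_K}\,e^{\frac16\int_\T\abs{u}^6\,\de x}\,\mathbbm{1}_{\{M(u)\le\norm{Q}_{L^2(\R)}^2\}}\ge 0$, where $Z_K\in(0,\infty)$ by \cite{OST}, and $N(u)\coloneqq\tfrac16\int_\T\abs{u}^6\,\de x$.

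\emph{Step 1: reduction to a weighted $L^1$ bound.} It suffices to produce some $a>0$ with $\int F(af)\,\de\mu<\infty$. Indeed, $F$ being a Young function (\Cref{F is Young}) it is continuous and nondecreasing on $[0,\infty)$, so for $c\ge a^{-1}$ we have $F(c^{-1}f)\le F(af)$ pointwise while $F(c^{-1}f)\to F(0)=0$ $\mu$-a.e.\ as $c\to\infty$; dominated convergence then gives $\int F(c^{-1}f)\,\de\mu\le 1$ for $c$ large, i.e.\ $\norm{f}_{F(L)(\mu)}<\infty$. Now from \eqref{F defn} we have $0\le F(x)=\widetilde F(\abs x)\le G(\abs x)$ for all $x$ (setting $G(0)\coloneqq 0$), and on the mass ball $\log(af(u))=N(u)+\log(a/Z_K)$, so, using $(s+t)^\alpha\le s^\alpha+t^\alpha$ for $s,t\ge 0$ and $0<\alpha\le 1$,
\[
    F(af)\le G(af)=af\,e^{q\abs{\log(af)}^\alpha}\le \frac{a\,e^{q\abs{\log(a/Z_K)}^\alpha}}{Z_K}\,e^{N(u)}\,e^{qN(u)^\alpha}\,\mathbbm{1}_{\{M(u)\le\norm{Q}_{L^2(\R)}^2\}}.
\]
Hence the whole statement reduces to the weighted bound
\begin{equation}\label{int-int-goal}
    \int_{\mathcal D'(\T)} e^{N(u)+qN(u)^\alpha}\,\mathbbm{1}_{\{M(u)\le\norm{Q}_{L^2(\R)}^2\}}\,\de\mu(u)<\infty .
\end{equation}

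\emph{Step 2: a quantitative tail bound, following \cite{OST}.} The key input — which I would isolate as the main lemma, proved in \Cref{Int int} by adapting the argument of \cite{OST} — is the tail estimate
\[
    \mu\bigl(\{N(u)\ge\lambda\}\cap\{M(u)\le\norm{Q}_{L^2(\R)}^2\}\bigr)\lesssim e^{-\lambda-\kappa\sqrt{\lambda}},\qquad \lambda\ge 1,
\]
for some $\kappa>0$. This strengthens \cite{OST}'s conclusion $Z_K<\infty$: the sharp Gagliardo--Nirenberg inequality on $\T$, with the same constant as in \eqref{GNS}, makes the leading exponent $e^{-\lambda}$ critical on the mass ball (it cannot be improved to $e^{-(1+\varepsilon)\lambda}$, by \Cref{density is not in Lp}), while a careful analysis of the soliton-concentration regime $u\approx\delta^{-1/2}Q(\delta^{-1}\cdot)$, in which $\lambda\sim\delta^{-2}$, yields the extra $e^{-\kappa\sqrt\lambda}$ (the $\sqrt\lambda$ being the $\delta^{-1}$ subleading scale); equivalently, one runs \cite{OST}'s argument with the weight $e^{qN^\alpha}$ inserted from the start. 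Granting this, \eqref{int-int-goal} follows by a layer-cake computation: with $g(\lambda)=e^{\lambda+q\lambda^\alpha}$ (increasing) and $g'(\lambda)\lesssim e^{\lambda+q\lambda^\alpha}(1+\lambda^{\alpha-1})$, with $\lambda^{\alpha-1}$ integrable near $0$,
\[
    \int g(N)\,\mathbbm{1}_{\{M\le\norm{Q}_{L^2(\R)}^2\}}\,\de\mu \le g(0)+\int_0^\infty g'(\lambda)\,\mu\bigl(\{N\ge\lambda\}\cap\{M\le\norm{Q}_{L^2(\R)}^2\}\bigr)\de\lambda \lesssim 1+\int_1^\infty e^{q\lambda^\alpha-\kappa\sqrt\lambda}\,\de\lambda ,
\]
and the last integral converges exactly when $\alpha<\tfrac12$, or $\alpha=\tfrac12$ and $q<\kappa$ — precisely the hypothesis on $(\alpha,q)$.

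\emph{Main obstacle.} The difficulty is entirely in the tail bound of Step 2. Because of \Cref{density is not in Lp} there is no slack on the leading exponent: the crude comparison $e^{qN^\alpha}\le C_\varepsilon e^{\varepsilon N}$ is useless, since $e^{(1+\varepsilon)N}\mathbbm{1}_{\{M\le\norm{Q}_{L^2(\R)}^2\}}$ is not $\mu$-integrable, and Markov's inequality applied to \cite{OST}'s bare conclusion $Z_K<\infty$ only yields the insufficient tail $\lesssim e^{-\lambda}$. One genuinely needs the subexponential gain $e^{-\kappa\sqrt\lambda}$, and producing it requires revisiting the proof of \cite{OST} and quantifying the margin it provides at the optimal mass threshold. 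This is exactly where the cutoff $\alpha=\tfrac12$ (with $q$ small) comes from, and where the bulk of the work in \Cref{Int int} lies.
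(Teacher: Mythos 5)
Your Step 1 is correct and coincides with the paper's own reduction: the observation (recorded in a footnote in \Cref{Int int}) that $\int F(af)\,\de\mu<\infty$ for a single $a>0$ already gives $\norm{f}_{F(L)(\mu)}<\infty$, combined with $F\le G$ and the explicit form of the density on the mass ball, reduces the proposition exactly to the paper's condition \eqref{goal1} (your weighted bound from Step~1 is the same statement up to a harmless renaming of $q$, since $N^\alpha=6^{-\alpha}\norm{u}_{L^6(\T)}^{6\alpha}$). Your layer-cake derivation of that bound from the tail estimate $\mu(\{N\ge\lambda\}\cap\{M\le\norm{Q}_{L^2(\R)}^2\})\lesssim e^{-\lambda-\kappa\sqrt\lambda}$ is also correct, and this tail estimate is genuinely equivalent (Chernoff in one direction, layer cake in the other) to what the paper proves, with the same dichotomy $\alpha<\tfrac12$, or $\alpha=\tfrac12$ and $q$ small.

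The gap is that the tail estimate itself --- which, as you say yourself, carries the entire content of the proposition --- is asserted rather than proved. ``Revisiting \cite{OST} and quantifying the margin at the optimal threshold'' is precisely the work of \Cref{Int int}, and it is not a routine insertion of the weight. Concretely: the stability result \Cref{stability} splits the integral into the region $S_\gamma$ far from the soliton manifold, where \Cref{S_gamma integrability} gives $L^r$-integrability of $e^{\frac16\int_\T\abs{u}^6\,\de x}$ for some $r>1$ (so the subexponential weight is absorbed for free there), and an approximate neighbourhood $U_{\varepsilon,\delta^\ast}$ of the manifold. On the latter one needs the change-of-variable formula \Cref{changeofvar}, the bound $q\norm{Q^\chi_\delta+v}_{L^6(\T)}^{6\alpha}\lesssim_{q,\alpha}\delta^{-2\alpha}+\norm{v}_{L^6(\T)}^{6\alpha}$, the uniform Gaussian estimate \Cref{integrabilityOrthogonal}, and, crucially, the estimate
\[
\int_{B_{\varepsilon_1}}e^{(1+\eta)J(v)}\,\mathbbm{1}_{\{M(Q^\chi_{\delta}+v)\le\norm{Q}_{L^2(\R)}^2\}}\,\de\mu^\perp_{\delta}(v)\lesssim e^{-c\delta^{-1}}
\]
uniformly in $0<\delta\ll1$ (Lemma 6.13 and Proposition 6.14 of \cite{OST}); only then does $\int_0^{\delta^\ast}e^{\tilde{c}_{q,\alpha}\delta^{-2\alpha}-c\delta^{-1}}\,\de\sigma(\delta)$ converge, and exactly under your hypotheses on $(\alpha,q)$. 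Your heuristic ($\lambda\sim\delta^{-2}$ on the soliton, gain at the subleading scale $\delta^{-1}=\sqrt\lambda$) correctly predicts this outcome, but without carrying out this analysis the proof is incomplete at its central step.
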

    \begin{proposition} \label{Orlicz tail bound}
        Let $q > 0 $, $\alpha \in (0,1)$ and $s < \frac{1}{2}$. Then, choosing $F$ as in \eqref{F defn}, there exists $c = c(\alpha, q)$ and $M_0 = M_0(\alpha,q,s)$ such that for $M\geq M_0$ 
        \[
            \big\lVert \mathbbm{1}_{\{\norm{u_0}_{H^s(\T)} \geq M\}} \big\rVert_{F^*(L)(\mu)} \lesssim_{\alpha,q,s} e^{-cM^{2 \alpha}}.
        \]
    \end{proposition}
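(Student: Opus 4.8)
The key point is that the Orlicz norm of an indicator is explicit. For a continuous, strictly increasing Young function $\Psi$ with $\lim_{x\to\infty}\Psi(x)=\infty$ and a set $A$ with $0<\mu(A)\le 1$, we have $\int_\Omega\Psi(\mathbbm{1}_A/c)\,\de\mu=\Psi(1/c)\,\mu(A)$, which is $\le 1$ exactly when $1/c\le\Psi^{-1}(\mu(A)^{-1})$ (generalised inverse), so
\[
	\norm{\mathbbm{1}_A}_{\Psi(L)(\mu)}=\frac{1}{\Psi^{-1}(\mu(A)^{-1})}.
\]
Taking $\Psi=F^*$ and $A=\{\norm{u_0}_{H^s(\T)}\ge M\}$ splits the proposition into (i) a Gaussian tail bound for $\mu(A)$ and (ii) an upper bound on the growth of $F^*$. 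For (i), writing $u_0=\sum_{n}\langle n\rangle^{-1}g_n e^{2\pi inx}$ under $\mu$ gives $\norm{u_0}_{H^s(\T)}^2=\sum_n\langle n\rangle^{2s-2}|g_n|^2$ with summable coefficients when $s<\frac12$; a Chernoff estimate with a frozen exponential-moment parameter (or the Borell--TIS inequality, since $\Exp_\mu\norm{u_0}_{H^s(\T)}^2<\infty$) yields $\mu(A)\le C_s\,e^{-\kappa M^2}$ for some $\kappa>0$ that may be taken independent of $\alpha,q,s$, hence $\mu(A)^{-1}\ge C_s^{-1}e^{\kappa M^2}$.

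For (ii) I would estimate $F^*(t)=\sup_s(st-F(s))$ as $t\to\infty$. Split the supremum at $s_*\coloneqq e^{1-\alpha}>1$, beyond which $G$, hence $F$, is convex and increasing by the computation in the proof of \Cref{F is Young}. On $[0,s_*]$ the quantity $st-F(s)$ is at most $s_*t$; on $[s_*,\infty)$ we have $F(s)=G(s)-G(s_*)$, so $\sup_{s\ge s_*}(st-F(s))=G(s_*)+\sup_{s\ge s_*}(st-G(s))$, and for $t$ large this last supremum is attained at the unique $s(t)$ with $G'(s(t))=t$. Using $G(x)=xe^{q(\log x)^\alpha}$ and $G'(x)=e^{q(\log x)^\alpha}\big(1+q\alpha(\log x)^{\alpha-1}\big)$ for $x>1$, one computes $s(t)t-G(s(t))=q\alpha(\log s(t))^{\alpha-1}G(s(t))\le G(s(t))$ once $s(t)$ is large, while $G'(s(t))=t$ forces $q(\log s(t))^\alpha=\log t+\mathcal{O}(1)$, so that $\log s(t)\le C(\alpha,q)(\log t)^{1/\alpha}$ and, since $e^{q(\log s(t))^\alpha}\le t$, also $G(s(t))\le t\,s(t)\le t\,e^{C(\alpha,q)(\log t)^{1/\alpha}}$. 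Combining, $F^*(t)\lesssim t\,e^{C(\alpha,q)(\log t)^{1/\alpha}}$ for $t\ge t_0(\alpha,q)$, hence
\[
	\log F^*(t)\le C'(\alpha,q)\,(\log t)^{1/\alpha}\qquad\text{for }t\ge t_0(\alpha,q),
\]
where it is crucial that $1/\alpha>1$, so that $(\log t)^{1/\alpha}$ absorbs the linear-in-$\log t$ terms.

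To combine (i) and (ii): since $(F^*)^{-1}$ is increasing, it suffices to show $(F^*)^{-1}(\mu(A)^{-1})\ge e^{cM^{2\alpha}}$ for $M\ge M_0$, i.e.\ $F^*(e^{cM^{2\alpha}})\le\mu(A)^{-1}$. By (ii), taking $t=e^{cM^{2\alpha}}$ gives $\log F^*(t)\le C'(\alpha,q)\,c^{1/\alpha}M^2$ for $M$ large, whereas by (i) $\log\mu(A)^{-1}\ge\kappa M^2-\log C_s$. Choosing $c=c(\alpha,q)$ small enough that $C'(\alpha,q)\,c^{1/\alpha}<\kappa$, and then $M_0=M_0(\alpha,q,s)$ large enough to absorb the additive constant $\log C_s$ and the threshold $t_0(\alpha,q)$, yields $F^*(e^{cM^{2\alpha}})\le\mu(A)^{-1}$, and therefore
\[
	\norm{\mathbbm{1}_{\{\norm{u_0}_{H^s(\T)}\ge M\}}}_{F^*(L)(\mu)}\le e^{-cM^{2\alpha}},
\]
which is stronger than the claimed bound.

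The step I expect to be the main obstacle is the asymptotic evaluation of $F^*$ in (ii): one must localise the Legendre supremum to $s\gg 1$, where $G$ is convex, the truncation $(\,\cdot\,)_+$ is inactive, and the non-monotonicity of $G$ near $x\approx 1$ contributes only an $\mathcal{O}(t)$ error; then solve $G'(s)=t$ to leading order and verify that the correction term $q\alpha(\log s)^{\alpha-1}$ is $o(1)$, so that $st-G(s)\lesssim G(s)$ at the maximiser. The indicator-norm identity and the Gaussian tail bound are both routine.
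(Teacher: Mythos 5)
Your proposal is correct and follows essentially the same route as the paper: the explicit formula for the Orlicz norm of an indicator, the Gaussian tail estimate for $\mu(\norm{u_0}_{H^s(\T)}\ge M)$, and an upper bound $\log F^*(t)\lesssim_{\alpha,q}(\log t)^{1/\alpha}$ obtained by localising the Legendre supremum past $e^{1-\alpha}$ and solving $G'(x)=t$ (this is precisely the paper's Lemma~\ref{boundF*}). The final step of choosing $c$ small so that $F^*(e^{cM^{2\alpha}})\le\mu(A)^{-1}$ matches the paper's computation of the infimum over $\lambda$.
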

	\Cref{better integrability} is proven in the following section and we delay the proof of \Cref{Orlicz tail bound} to the end of the present section. Using these propositions and \eqref{Orlicz Holder}, together with the local well-posedness \Cref{LWP_NLS} as in \Cref{Bourgain's strategy}, we have, for $M\ge M_0$
	\[
    	\rho\mleft(\sup_{t\in[-T,T]}\norm{u(t)}_{H^s(\T)}\ge 2M\mright) 
        \lesssim_{\alpha,q,s}
        (1 + 2T M^{\beta})e^{-cM^{2\alpha}}.
	\]
	Working at $\alpha=\frac{1}{2}$ and $0<q\ll1$, choosing $M = C\log(2+T)$ with $C>0$ chosen later, and restricting to large enough integers $T\ge T_0$, we see that
    \[
        \rho\mleft( \sup_{t \in [-T,T]} \norm{u(t)}_{H^s(\T)} \geq 2C\log(2+T) \mright) \lesssim \left(1 + 2C^\beta T(\log(2+T))^\beta\right) (2+T)^{- cC}. 
    \]
    Choosing $C$ such that $-cC<-2$, we have
    \[
        \sum_{T = T_0}^\infty \rho\mleft( \sup_{t \in [-T,T]} \norm{u(t)}_{H^s(\T)} \geq 2C\log(2+T) \mright)
        < \infty
    \]
    and so the Borel-Cantelli lemma implies that $\rho(A) = 0$ where $A$ is the event of all $u_0$ such that
    \[
        \sup_{t \in [-T,T]} \norm{u(t)}_{H^s(\T)} \geq 2C\log(2+T)
    \]
    for infinitely many integers $T \geq T_0$, where $u$ solves \eqref{NLS} with initial data $u_0$. This implies the $\rho$-a.s.\ bound for all $T>0$ claimed in \Cref{a.s. bounds NLS}-(i):
    \[
        \sup_{t \in [-T,T]} \norm{u(t)}_{H^s(\T)} \lesssim_{s, u_0} \log(2+T).
    \]
	As promised, we conclude this section with the proof of \Cref{Orlicz tail bound}.
    \begin{lemma}\label{boundF*}
        Let $q > 0 $ and $\alpha \in (0,1)$. Then there exist constants $c_{\alpha,q},y_{\alpha,q}\geq1$ depending on $\alpha$ and $q$ such that for $y \geq y_{\alpha,q}$
        \[
            F^* (y) \leq e^{c_{\alpha,q} (\log y)^\frac{1}{\alpha}}.
        \]
    \end{lemma}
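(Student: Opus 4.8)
The plan is to bound the Legendre transform $F^*(y)=\sup_{x\in\R}(xy-F(x))$ directly from the definition of $F$, without solving for the maximiser. First I would reduce to the half-line $x>0$: since $F$ is even, nonnegative and $F(0)=0$, we have $F^*(y)=\sup_{x\ge 0}(xy-F(x))$ for $y\ge 0$, because $xy-F(x)\le 0$ when $x<0$. From $F(x)=(G(|x|)-G(e^{1-\alpha}))_+\ge G(|x|)-G(e^{1-\alpha})$ for all $x$ (using $(\cdot)_+\ge(\cdot)$, and $G(0):=0$) one then gets
\[
F^*(y)\le G(e^{1-\alpha})+\sup_{x>0}\bigl(xy-G(x)\bigr),
\]
so it suffices to control $\sup_{x>0}(xy-G(x))$ with $G(x)=xe^{q|\log x|^\alpha}$.

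Next I would split this supremum at $x=1$. For $0<x\le 1$, since $G\ge 0$ we have $xy-G(x)\le xy\le y$. For $x\ge 1$ we may drop the absolute value, $G(x)=xe^{q(\log x)^\alpha}$, so $xy-G(x)=x\bigl(y-e^{q(\log x)^\alpha}\bigr)$, which is nonpositive as soon as $e^{q(\log x)^\alpha}\ge y$, i.e.\ once $x\ge x_0:=e^{(q^{-1}\log y)^{1/\alpha}}$; hence $xy-G(x)\le x_0 y=y\,e^{(q^{-1}\log y)^{1/\alpha}}$ for every $x\ge 1$, provided $x_0\ge 1$, which holds exactly when $y\ge e^{q}$. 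Combining the two regions (the second bound dominates the first once $y\ge e^{q}$, since then the exponential factor exceeds $1$),
\[
F^*(y)\le G(e^{1-\alpha})+y\,e^{(q^{-1}\log y)^{1/\alpha}}\qquad(y\ge e^{q}),
\]
and as $G(e^{1-\alpha})$ is a fixed constant, the right-hand side is at most $2y\,e^{(q^{-1}\log y)^{1/\alpha}}$ once $y$ is sufficiently large.

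Finally I would pass to logarithms:
\[
\log\Bigl(2y\,e^{(q^{-1}\log y)^{1/\alpha}}\Bigr)=\log 2+\log y+q^{-1/\alpha}(\log y)^{1/\alpha}.
\]
Because $\alpha<1$ we have $1/\alpha>1$, so $(\log y)^{1/\alpha}$ grows strictly faster than $\log y$; therefore there exists $y_{\alpha,q}\ge 1$ with $\log 2+\log y\le(\log y)^{1/\alpha}$ for all $y\ge y_{\alpha,q}$, and then $F^*(y)\le e^{c_{\alpha,q}(\log y)^{1/\alpha}}$ with $c_{\alpha,q}:=q^{-1/\alpha}+1$ (enlarging $c_{\alpha,q}$ and $y_{\alpha,q}$ if necessary so that $c_{\alpha,q}\ge 1$), which is the assertion.

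I do not expect a genuine obstacle here; the one point needing care is to \emph{avoid} computing the critical point of $x\mapsto xy-G(x)$ (which would couple $x$ and $y$ through a transcendental relation) and instead use only the crude observation that this function is nonpositive beyond $x_0$. The remaining work is bookkeeping — the reduction to $x>0$, checking that $x_0\ge 1$ precisely when $y\ge e^{q}$, and absorbing the additive constant $G(e^{1-\alpha})$ and the factor $2$ into the superlinear term $(\log y)^{1/\alpha}$ — and this is what pins down the final threshold $y_{\alpha,q}$.
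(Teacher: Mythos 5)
Your proof is correct, and it reaches the same intermediate bound as the paper, $F^*(y)\lesssim y\,e^{(q^{-1}\log y)^{1/\alpha}}$, by a slightly more elementary route. The paper splits the supremum at $x=e^{1-\alpha}$, differentiates $h_y(x)=xy-G(x)+G(e^{1-\alpha})$, uses $G''>0$ to show every critical point is a global maximum, and then bounds the location of the maximiser via the stationarity relation $\log y = q(\log x)^\alpha+\log(1+q\alpha(\log x)^{\alpha-1})\ge q(\log x)^\alpha$. You avoid the critical-point analysis entirely: the same inequality $e^{q(\log x)^\alpha}\ge y$ for $x\ge x_0=e^{(q^{-1}\log y)^{1/\alpha}}$ shows $xy-G(x)\le 0$ there, while $xy-G(x)\le x_0y$ trivially on $[1,x_0]$; this buys you a derivative-free argument at the cost of nothing, since the final log-bookkeeping is identical (and in fact the paper's own convexity computation for $G$ is already needed for \Cref{F is Young}, so no work is saved globally). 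One small slip: $x_0\ge1$ is equivalent to $y\ge1$, not to $y\ge e^{q}$ (the latter is $x_0\ge e$); this is harmless since you are free to enlarge $y_{\alpha,q}$, and moreover if $x_0<1$ the bound $xy-G(x)\le 0\le x_0y$ for $x\ge1$ holds anyway.
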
     
    \begin{proof}
        Since $F$ is even, we have for $y > 0$
        \begin{align*}
           F^*(y)
           &= \sup_{x \geq 0} xy - F(x) 
           \\
           &= \left(\sup_{x \in [0,e^{1-\alpha}]} xy - F(x) \right) \vee \left( \sup_{x \geq e^{1- \alpha}} xy - F(x) \right) 
           \\
           &\leq 
           y e^{1-\alpha} \vee \left( \sup_{x \geq e^{1- \alpha}} xy - x e^{q (\log x)^\alpha} + G\mleft(e^{1-\alpha}\mright) \right).
        \end{align*}
        Let $h_y(x) \coloneqq xy - x e^{q (\log x)^\alpha} + G\mleft(e^{1-\alpha}\mright)$.
        Then we have $h_y'(x) = 0$ for $x \geq e^{1- \alpha}$ if and only if $y = G'(x)$, where
        \begin{align*}
            G'(x)
            &=
            e^{q (\log x)^\alpha}\left(1 + q \alpha (\log x)^{\alpha-1}\right)
            \\
            &\geq 
            e^{q (1- \alpha)^\alpha}\left(1 + q \alpha (1 - \alpha)^{\alpha-1}\right)
            \eqqcolon 
            y_{\alpha,q} 
            \geq
            1
        \end{align*}
        using the monotonicity of $G'$ as seen in the proof of \Cref{F is Young}. Furthermore,
        \[
            h_y''(x) = -G''(x) < 0.
        \]
        Hence, every zero of $h_y'$ is a global maximum point of $h_y$. To estimate $F^*$, we note that $h_y'(x) = 0$ is equivalent to
        \[
            \log y = q (\log x)^\alpha + \log( 1 + q \alpha (\log x)^{\alpha-1}).
        \]
        Hence, for these $x \geq e^{1-\alpha}$ and $y > 0$
        \[
            q (\log x)^\alpha \leq \log y \leq c (\log x)^\alpha, 
        \]
        where $c = q + \log(1 + q \alpha (1- \alpha)^{\alpha - 1})\frac{1}{(1 - \alpha)^\alpha}$. Thus for $y \geq y_{\alpha,q}$, the supremum of $h_y(x)$ for $x \geq e^{1- \alpha}$ is attained at $x = x(y) \leq e^{(q^{-1} \log y )^\frac{1}{\alpha}}$ and consequently for $y \geq y_{\alpha,q}$
        \begin{align*}
            F^*(y)
            &\leq y e^{1-\alpha} \vee \left( \sup_{x \geq e^{1- \alpha}} xy - x e^{q (\log x)^\alpha} + G\mleft(e^{1-\alpha}\mright) \right)
            \\
            &\leq
            y e^{1-\alpha} \vee y e^{(q^{-1} \log y )^\frac{1}{\alpha}}
            \\
            &\leq
            y e^{ (q^{-1}\log y)^{\frac{1}{\alpha}}}
            \\
            &\leq
            e^{( 1 + \left(q^{-1}\right)^{\frac{1}{\alpha}}) \left( \log y \right)^{\frac{1}{\alpha}}}. \qedhere
        \end{align*}
    \end{proof}
    \begin{proof}[Proof of \Cref{Orlicz tail bound}.]
        We have, for sufficiently large $M>M_0$, where $M_0$ is determined below,
        \begin{align*}
            \big\lVert \mathbbm{1}_{\{\norm{u_0}_{H^s(\T)} \geq M\}} \big\rVert_{F^*(L)(\mu)}
            &=
            \inf \left\{ \lambda > 0 : \int F^*\mleft(\frac{\mathbbm{1}_{\{\norm{u_0}_{H^s(\T)} \geq M\}}}{\lambda}\mright) \, \de \mu(u_0) \leq 1 \right\}
            \\
            &=
            \inf \left\{ \lambda>0 : F^*\mleft(\lambda^{-1}\mright) \mu \mleft( \norm{u_0}_{H^s(\T)} \geq M \mright) \leq 1 \right\}
            \\
            &\leq
            \inf \left\{ \lambda>0 : F^*\mleft(\lambda^{-1}\mright)c_s e^{-\frac{M^2}{4}} \leq 1 \right\}
            \\
            &\leq 
            \inf \left\{ 0 < \lambda \leq y_{\alpha,q}^{-1} : c_s e^{c_{\alpha,q} \left(\log \lambda^{-1}\right)^\frac{1}{\alpha} - \frac{M^2}{4} } \leq 1 \right\},
        \end{align*}
        and where we used the fact that $F^*(0) = 0$, $F^*$ is increasing, the tail estimate (from which we have the constant $c_s$), and \Cref{boundF*}. Here $M_0$ is such that
        \[
            c_s e^{c_{\alpha,q} (\log y_{\alpha,q})^\frac{1}{\alpha}- \frac{M_0^2}{4} } = 1
        \]
        is satisfied. Next, we note that
        \[
             c_s e^{c_{\alpha,q} (\log \lambda^{-1})^\frac{1}{\alpha} - \frac{M^2}{4} } \leq 1
        \]
        is satisfied for 
        \[
            \lambda \geq e^{- \left( \frac{M^2}{4c_{\alpha, q}} - \frac{\log c_s}{c_{\alpha, q}} \right)^\alpha }
            .
        \]
        Since $0 < \alpha < 1$ there exists $c_1 = c_1(\alpha , q , s)$ such that
        \[
            e^{- ( \frac{M^2}{4c_{\alpha, q}} - \frac{\log c_s}{c_{\alpha, q}} )^\alpha } \leq c_1 e^{ - \left( 4 c_{\alpha,q}\right)^{-\alpha} M^{2\alpha}},
        \]
        and hence 
        \[
            \big\lVert \mathbbm{1}_{\{\norm{u_0}_{H^s(\T)} \geq M\}} \big\rVert_{F^*(L)(\mu)} \leq c_1 e^{-\left( 4 c_{\alpha,q}\right)^{-\alpha} M^{2 \alpha}}. \qedhere
        \]
    \end{proof}
	\section{Proving the intermediate integrability} \label{Int int}
	Let $q>0$ and $\alpha\in(0,\frac{1}{2})$ or $\alpha=\frac{1}{2}$ and $0<q\ll1$. In order to prove \Cref{better integrability}, it suffices to show the following integrability condition\footnote{It is easy to see that $\int_\Omega \Psi\mleft( f(\omega)\mright) \, \de \nu(\omega) < \infty$ for a measurable function $f$ and a Young function $\Psi$ implies $\norm{f}_{\Psi(L)} < \infty$.}
	\begin{equation}
	   \int_{\mathcal{D}'(\T)} e^{\frac16\norm{u}_{L^6(\T)}^6+q\norm{u}_{L^6(\T)}^{6\alpha}}\mathbbm{1}_{\{M(u)\le \norm{Q}_{L^2(\R)}^2\}}\, \de\mu(u)<\infty. 
       \label{goal1}
	\end{equation}
    Our approach is based entirely on that taken in \cite{OST}, where the authors prove \eqref{goal1} for $q=0$, decomposing their analysis near and far from an approximate soliton manifold of optimisers for the GNS inequality. It turns out that far from the manifold, data are not sharp for the GNS inequality in a quantitative way. 
	\subsection{The soliton manifold}
	The goal of this subsection is to build an approximate soliton manifold to which to compare $L^2(\T)$ functions. Given $\delta>0$ and $x_0\in\T$, define the $L^2$-invariant scaling and translation
	\begin{equation}
	   Q_{\delta,x_0}(x) = \delta^{-\frac12} Q(\delta^{-1}(x-x_0)),
	\end{equation}
	where $x-x_0$ is interpreted as an element of $\T$, where we use the identification $\T \cong [-\frac{1}{2},\frac{1}{2})$. In order to obtain a suitable periodisation of $Q$, we multiply by a smooth cutoff function: let $\chi$ be as in the proof of \Cref{density is not in Lp} and accordingly consider $Q^\chi_{\delta,x_0} \colon \T \to \R$ given by
	\begin{equation}
    	Q^\chi_{\delta,x_0}(x) = \chi(x-x_0) \delta^{-\frac12} Q(\delta^{-1}(x-x_0)).
	\end{equation}
	\begin{remark}
    	The set
    	\[
        	\mathcal{M}_{\delta^\ast}
            =
            \{
            e^{i\theta} Q^\chi_{\delta,x_0} : 0 < \delta < \delta^\ast , x_0 \in \T, \theta \in \R 
            \}
    	\]
    	can be viewed as a smooth manifold of dimension $3$ embedded in $H^1(\T)$. The cutoff considered is necessary for this to be true; see Remark 6.2 in \cite{OST}.
        \end{remark}
	\subsection{Stability of GNS optimisers}
	Let $\mathbf{P}_{\le k}$ (resp.\ $\mathbf{P}_0$, $\mathbf{P}_{\ne 0}$) denote the sharp Fourier projection onto frequencies $\{|n|\le 2^k\}$ (resp.\ $\{0\}$, $\{ n\ne 0 \}$) and $u_{\le k} = \mathbf{P}_{\le k}u$ (resp.\ $u_{=0}=\mathbf{P}_0u$, $u_{\ne 0}=\mathbf{P}_{\ne 0}u$). Now, we define for $\gamma>0$
	\begin{equation*} 
        \begin{split}
        	S_\gamma = \big\{ u \in L^2&(\T) : M(u) \le \norm{Q}_{L^2(\R)}^2,
            \\
        	&\norm{ \mathbf{P}_{\le k} u_{\ne 0}}_{L^6(\T)}^6 \le (C_{\rm GNS} - \gamma) \norm{\partial_x \mathbf{P}_{\le k}u}_{L^2(\T)}^2 \norm{Q}_{L^2(\R)}^4 \text{ for all } k\ge 1 \big\}.
	    \end{split}
    \end{equation*}
    As the elements in $S_\gamma$ are not sharp for the GNS inequality, we expect good integrability over this set. In fact, this is the case.
	\begin{lemma} \label{S_gamma integrability}
    	Let $\gamma>0$. Then there exists an exponent $1<r<\infty$, depending on $\gamma$, such that
    	\begin{equation}
        	\int_{S_\gamma} e^{\frac{r}{6}\int_{\T} \abs{u}^6\,\de x
            } \, \de\mu(u)<\infty.
    	\end{equation}
	\end{lemma}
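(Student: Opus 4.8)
The plan is to run the Boué--Dupuis variational argument used in \cite{OST}, exploiting the quantitative deficit $\gamma$ in the Gagliardo--Nirenberg--Sobolev inequality that is encoded in the definition of $S_\gamma$; it is exactly this deficit that will allow an exponent $r>1$. As a first reduction, note that on $S_\gamma$ the zero mode is deterministically bounded, $|u_{=0}|\le\norm{Q}_{L^2(\R)}$, since $M(u)\le\norm{Q}_{L^2(\R)}^2$. Using the elementary inequality $|a+b|^6\le(1+\varepsilon)|a|^6+C_\varepsilon|b|^6$ with $a=u_{\ne0}$, $b=u_{=0}$, it therefore suffices to bound $\int_{S_\gamma}e^{\frac r6\norm{u_{\ne0}}_{L^6(\T)}^6}\,\de\mu(u)$ for some $r>1$ (a multiplicative loss $(1+\varepsilon)$ on the $L^6$-energy is always absorbed into a further small reduction of $r$ towards $1$). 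One should also first replace $\norm{\cdot}_{L^6(\T)}^6$ by a frequency truncation, prove the bound uniformly in the truncation parameter, and then pass to the limit --- this legitimises the variational formula, whose hypotheses are precisely what we are proving.

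By the Boué--Dupuis formula, the uniform bound reduces to showing that, for every admissible drift such that the path endpoint $u:=\mathbb Y+\Theta$ lies in $S_\gamma$ almost surely --- where $\mathbb Y\sim\mu$ and $\Theta$ is an $H^1(\T)$-valued random variable with $\Exp\norm{\Theta}_{H^1(\T)}^2$ bounded by the drift cost appearing in the formula --- one has $\frac r6\Exp\norm{u_{\ne0}}_{L^6(\T)}^6\le\frac12\Exp\norm{\Theta}_{H^1(\T)}^2+C$. The core is a pathwise estimate at a $\Theta$-dependent frequency scale. Fix a small $\kappa>0$ and put $k:=\big\lceil(1+\kappa)\log_2(2+\norm{\Theta}_{H^1(\T)})\big\rceil\ge 1$. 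On the event $\{\norm{\Theta}_{H^1(\T)}\ge 2\}$ I would: (i) split $\norm{u_{\ne0}}_{L^6(\T)}^6\le(1+\varepsilon)\norm{\mathbf P_{\le k}u_{\ne0}}_{L^6(\T)}^6+C_\varepsilon\norm{\mathbf P_{>k}u_{\ne0}}_{L^6(\T)}^6$; (ii) apply the defining inequality of $S_\gamma$ at scale $k$ to the first term, obtaining the bound $(1+\varepsilon)(C_{\rm GNS}-\gamma)\norm{Q}_{L^2(\R)}^4\norm{\partial_x\mathbf P_{\le k}u}_{L^2(\T)}^2$, and then $\norm{\partial_x\mathbf P_{\le k}u}_{L^2(\T)}^2\le(1+\varepsilon)\norm{\Theta}_{H^1(\T)}^2+C_\varepsilon\norm{\partial_x\mathbf P_{\le k}\mathbb Y}_{L^2(\T)}^2$; (iii) control the remainder via $H^{1/3}(\T)\hookrightarrow L^6(\T)$, the $L^6$-boundedness of the sharp cutoff, and frequency localisation by $\norm{\mathbf P_{>k}u_{\ne0}}_{L^6(\T)}^6\lesssim\norm{\mathbb Y}_{L^6(\T)}^6+2^{-4k}\norm{\Theta}_{H^1(\T)}^6$. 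By the choice of $k$ one has $\norm{\Theta}_{H^1(\T)}^{1+\kappa}\le 2^k\le C\norm{\Theta}_{H^1(\T)}^{1+\kappa}$ and $\norm{\partial_x\mathbf P_{\le k}\mathbb Y}_{L^2(\T)}^2\le 2^k\,\Xi$, where $\Xi:=\sup_m 2^{-m}\norm{\partial_x\mathbf P_{\le m}\mathbb Y}_{L^2(\T)}^2\in\bigcap_{p<\infty}L^p(\mu)$ by a Doob-type maximal estimate; hence the two ``error'' contributions are $\lesssim(\norm{\Theta}_{H^1(\T)}^{1+\kappa}+1)\Xi$ and $\lesssim\norm{\Theta}_{H^1(\T)}^{2-4\kappa}+\norm{\mathbb Y}_{L^6(\T)}^6$, and since $1+\kappa<2$ and $2-4\kappa<2$, Young's inequality (with the finite moments of $\Xi$ and $\norm{\mathbb Y}_{L^6(\T)}$) makes both expectations $\le\delta\,\Exp\norm{\Theta}_{H^1(\T)}^2+C_\delta$ for any $\delta>0$.

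It then remains to count constants. The $L^2$-normalisation $\norm{Q}_{L^6(\R)}^6=3\norm{\partial_x Q}_{L^2(\R)}^2$ from \eqref{GNS} gives $C_{\rm GNS}\norm{Q}_{L^2(\R)}^4=3$, so $(C_{\rm GNS}-\gamma)\norm{Q}_{L^2(\R)}^4=3-\gamma\norm{Q}_{L^2(\R)}^4<3$. Combining (i)--(iii) and taking expectations yields $\frac r6\Exp\norm{u_{\ne0}}_{L^6(\T)}^6\le\big(\frac r6(1+\varepsilon)^2(3-\gamma\norm{Q}_{L^2(\R)}^4)+2\delta\big)\Exp\norm{\Theta}_{H^1(\T)}^2+C$; since $\frac16(3-\gamma\norm{Q}_{L^2(\R)}^4)<\frac12$, one may fix $\varepsilon,\delta$ small and then $r=r(\gamma)>1$ so that the bracket is $\le\frac12$, which is the required inequality. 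On the complementary event $\{\norm{\Theta}_{H^1(\T)}<2\}$ the bound is immediate, as there $\norm{u_{\ne0}}_{L^6(\T)}^6\lesssim\norm{\mathbb Y}_{L^6(\T)}^6+1$ has finite expectation. Undoing the truncation and the zero-mode reduction then proves the claimed exponential integrability for some $r=r(\gamma)>1$.

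The step I expect to be the main obstacle is the one characteristic of such variational arguments: the cutoff $k$ depends measurably on the drift, hence is correlated with $\mathbb Y$, so the uniformity over all admissible drifts of the ``error'' expectations is not automatic. I would handle this exactly as in \cite{OST}: bound every $\mathbb Y$-dependent quantity that appears --- $\norm{\partial_x\mathbf P_{\le m}\mathbb Y}_{L^2(\T)}^2$ and $\norm{\mathbf P_{>m}\mathbb Y}_{L^6(\T)}^6$ --- for \emph{all} $m$ at once by a single drift-independent random variable ($\Xi$ together with $\norm{\mathbb Y}_{L^6(\T)}^6$) lying in every $L^p(\mu)$, and only then insert the drift-dependent value of $k$, conditioning on the underlying filtration where needed. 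The remaining ingredients are routine: the torus Gagliardo--Nirenberg--Sobolev inequality with the sharp constant together with the cutoff/periodisation bookkeeping (the same calculus already carried out in \cite{OST}); the repeated $(1+\varepsilon)$-losses; and the truncation-limit step. Each of these costs at most an arbitrarily small further shrinkage of $r$ towards $1$, so the scheme still delivers an exponent $r=r(\gamma)>1$.
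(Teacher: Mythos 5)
Your proposal is correct and follows one of the two routes the paper itself invokes: the paper's proof of \Cref{S_gamma integrability} is only a pointer to ``adapt Section 4.1 of \cite{OST} or use the variational approach of \cite{BG20} (see Proposition 3.1 in \cite{TolomeoWeber})'', and your argument is a sound, detailed implementation of that Bou\'e--Dupuis variational route, with the key points (the normalisation $C_{\rm GNS}\norm{Q}_{L^2(\R)}^4=3$ giving the margin $\frac16(3-\gamma\norm{Q}_{L^2(\R)}^4)<\frac12$, the drift-dependent frequency cutoff, and the drift-independent maximal control of the Gaussian contributions) handled correctly. No substantive discrepancy with the paper's intended proof.
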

	\begin{proof}
	    One can adapt the proof found in Section 4.1 of \cite{OST} or use the variational approach introduced in \cite{BG20} (see Proposition 3.1 in \cite{TolomeoWeber} for details).
        \end{proof}
    Next, we introduce a crucial stability lemma.
	\begin{lemma}[\cite{OST}, Lemma 6.3] \label{stability}
    	Given any $\varepsilon>0$ and $\delta^\ast>0$, there exists $\gamma=\gamma(\varepsilon,\delta^\ast)>0$ such that the following holds: suppose $u\in L^2(\T)$ satisfies $M(u)\le \norm{Q}_{L^2(\R)}^2$ and
    	\begin{equation}
    	   \big\lVert u-e^{i\theta}Q_{\delta,x_0}^\chi\big\rVert_{L^2(\T)}\ge\varepsilon
    	\end{equation}
    	for all $0<\delta<\delta^\ast$, $x_0\in\T$, and $\theta\in\R$. Then $u\in S_\gamma$.
	\end{lemma}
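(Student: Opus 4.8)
The plan is to establish this quantitative rigidity statement by contradiction, combining a compactness argument with the sharpness and qualitative stability of the Gagliardo--Nirenberg--Sobolev inequality \eqref{GNS} on $\R$. Suppose the conclusion fails for some fixed $\varepsilon,\delta^\ast>0$. Then for each $n\ge1$ there is $u_n\in L^2(\T)$ with $M(u_n)\le\norm{Q}_{L^2(\R)}^2$ which stays at $L^2(\T)$-distance $\ge\varepsilon$ from the manifold $\mathcal{M}_{\delta^\ast}$, yet $u_n\notin S_{1/n}$; hence for some integer $k_n\ge1$, writing $v_n:=\mathbf{P}_{\le k_n}u_n$ and $w_n:=(v_n)_{\ne0}$, the function $w_n$ violates the inequality defining $S_{1/n}$. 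Since $M(w_n)\le M(v_n)\le\norm{Q}_{L^2(\R)}^2$ and $\norm{\partial_x v_n}_{L^2(\T)}=\norm{\partial_x w_n}_{L^2(\T)}$, this says exactly that $w_n$ is an $O(1/n)$-near-optimiser of \eqref{GNS} among mean-zero functions on $\T$ of mass $\le\norm{Q}_{L^2(\R)}^2$.

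The first step is to identify the concentration scale. Near-optimality keeps $\norm{w_n}_{L^2(\T)}$ bounded away from $0$ (otherwise $\norm{w_n}_{L^6(\T)}$ would be too small by interpolation), and I would dispose of the degenerate cases in which either the characteristic frequency $\lambda_n:=\norm{\partial_x w_n}_{L^2(\T)}/\norm{w_n}_{L^2(\T)}$ or the ratio $2^{k_n}/\lambda_n$ remains bounded: in each such case, after rescaling, the $w_n$ range over a fixed band-limited family on which the GNS ratio is bounded strictly below $C_{\rm GNS}$, because a band-limited periodic function cannot saturate $C_{\rm GNS}$, which requires concentration at a point; passing to the limit then contradicts the $O(1/n)$-near-optimality. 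Thus, after passing to a subsequence, $\delta_n:=\lambda_n^{-1}\to0$, $k_n\to\infty$, and $2^{k_n}\delta_n\to\infty$.

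The second step is the profile extraction. Choosing a concentration point $x_{0,n}\in\T$ for $|w_n|^2$ and setting $\widetilde{w}_n(y):=\delta_n^{1/2}w_n(x_{0,n}+\delta_n y)$, regarded as a function on $\R$ supported in $[-\tfrac12\delta_n^{-1},\tfrac12\delta_n^{-1})$, the rescaled functions have bounded $L^2(\R)$ and $\dot H^1(\R)$ norms, are $o(1)$-near-optimisers of \eqref{GNS} on $\R$, and have Fourier support in $\{|\xi|\lesssim 2^{k_n}\delta_n\}$, a range that grows. By the qualitative stability of \eqref{GNS} on $\R$ — near-optimisers converge, modulo $L^2$-scaling, spatial translation, and phase rotation (Galilean boosts are excluded since they strictly increase the $\dot H^1$ norm) — after renormalising the scalings, translations, and phases, $\widetilde{w}_n$ converges strongly in $L^2(\R)\cap\dot H^1(\R)$ to $cQ$ for some $c>0$. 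Unwinding the rescaling and replacing the exact soliton by its cutoff $Q^\chi_{\delta,x_0}$ then produces $\theta_n\in\R$, $x_n\in\T$, and $\delta_n'\to0$ with $\norm{u_n-e^{i\theta_n}Q^\chi_{\delta_n',x_n}}_{L^2(\T)}\to0$, which for $n$ large contradicts the standing hypothesis and proves the lemma, with $\gamma$ furnished by the contradiction.

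The main obstacle I anticipate is precisely this last passage: one must upgrade near-optimality of the \emph{truncated, mean-zero} piece $w_n$ to $L^2(\T)$-closeness of the \emph{full} $u_n$ to $\mathcal{M}_{\delta^\ast}$. This requires showing that, on a near-optimiser, the discarded zero mode $\widehat{u_n}(0)$ and the high-frequency tail $u_n-v_n$ are both $L^2(\T)$-negligible (adding a constant mode or a high-frequency remainder strictly degrades the GNS ratio), and that replacing the exact rescaled soliton by the periodised, cut-off profile $Q^\chi_{\delta,x_0}$ costs only $o(1)$ in $L^2$ as $\delta\to0$; keeping the interplay of the truncation scale $2^{k_n}$, the concentration scale $\lambda_n$, and the mean-zero constraint under control throughout is the delicate bookkeeping of the argument.
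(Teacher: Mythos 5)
First, a point of comparison: the paper does not prove \Cref{stability} at all. It is quoted verbatim from Lemma 6.3 of \cite{OST} and used as a black box, so there is no in-paper proof to measure your attempt against. What you have written is, in outline, a reconstruction of the strategy used in \cite{OST}: argue by contradiction, extract from $u_n\notin S_{1/n}$ a frequency-truncated, mean-zero near-optimiser $w_n$ of \eqref{GNS}, identify the concentration scale, rescale to the line, invoke compactness of near-optimising sequences modulo scaling, translation and phase, and unwind to contradict the distance hypothesis. The step you flag as the main obstacle at the end is actually the easy part: by orthogonality of the Fourier truncation, $M(u_n)=M(w_n)+M(u_n-w_n)+|\widehat{u_n}(0)|^2$, and since near-saturation forces $M(w_n)\to\norm{Q}_{L^2(\R)}^2$ while $M(u_n)\le\norm{Q}_{L^2(\R)}^2$, the discarded zero mode and high-frequency tail vanish in $L^2$ automatically; no "degradation of the GNS ratio" argument is needed.

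The genuine content, which your sketch treats as given, sits elsewhere. First, to force $M(w_n)\to\norm{Q}_{L^2(\R)}^2$ and to dispose of the degenerate cases you need the sharp Gagliardo--Nirenberg inequality for mean-zero functions on $\T$ with the \emph{line} constant $C_{\rm GNS}$ (up to a lower-order additive correction), together with its non-attainment; this is a separate lemma in \cite{OST} and cannot be skipped, since a larger torus constant would only bound $M(w_n)$ below, not send it to $\norm{Q}_{L^2(\R)}^2$. Second, the "qualitative stability of \eqref{GNS} on $\R$" (precompactness of optimising sequences modulo the symmetry group) is itself a concentration-compactness theorem and is the engine of the whole proof; citing it is legitimate but it is where the work is. Third, extending $\widetilde w_n$ to $\R$ by zero outside a fundamental domain is not licit as stated: a jump at the seam destroys the $\dot H^1(\R)$ bound, and smallness of the $L^2$ mass away from the concentration point does not repair it; one must either work on the dilated torus throughout or insert a cutoff before comparing with the line inequality, which is precisely why the manifold is built from $Q^\chi_{\delta,x_0}$ rather than $Q_{\delta,x_0}$. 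None of these is a wrong turn, but as written the proposal is a roadmap rather than a proof: the three items above are where the proof of Lemma 6.3 of \cite{OST} actually lives.
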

	\subsection{A change-of-variable formula}
    
	Let $U_{\varepsilon,\delta^\ast}=U_{\varepsilon,\delta^\ast}(\T)$ be an approximate $\varepsilon$-neighbourhood of $\mathcal{M}_{\delta^\ast}$ given by
	\begin{equation*}
    	U_{\varepsilon,\delta^\ast}=\big\{u\in L^2(\T):\big\lVert u-e^{i\theta}Q^\chi_{\delta,x_0}\big\rVert_{L^2(\T)}<\varepsilon \text{ for some } 0<\delta<\delta^\ast,x_0\in\T,\theta\in\R\big\}.
	\end{equation*}
    We say ``approximate'' because of the presence of the cutoff function $\chi$. \medskip
    
    In view of \Cref{S_gamma integrability} and \Cref{stability}, it suffices to prove that
	\begin{equation} \label{goalnbhd}
    	\int_{U_{\varepsilon,\delta^\ast}} 
        e^{
        \frac16 \norm{u}_{L^6(\T)}^6 + q \norm{u}_{L^6(\T)}^{6\alpha}
        }
        \mathbbm{1}_{ \{ M(u) \le \norm{Q}_{L^2(\R)}^2 \} } \, \de\mu(u) 
        < \infty
	\end{equation}
	for any small $\varepsilon,\delta^\ast>0$. To do this, we introduce a change-of-variable formula used by the authors of \cite{OST} for integrals over $U_{\varepsilon,\delta^\ast}$, which parametrises this neighbourhood of $\mathcal{M}_{\delta^\ast}$. Given $0 < \delta \ll 1$, $x_0 \in \T$ and $0 \leq \theta < 2\pi$, we define $V_{\delta,x_0,\theta} = V_{\delta,x_0,\theta}(\T)$, a subspace orthogonal to $\mathcal{M}_{\delta^\ast}$, by
    \begin{equation} \label{ogV}
       V_{\delta,x_0,\theta}(\T) \coloneqq
        \big\{
            u \in L^2(\T) :
            \innerprod{u}{(1- \partial_x^2) \varphi}_{L^2(\T)} =0
            \text{ for }
            \varphi \in 
                T \mathcal{M}_{\delta,x_0,\theta}(\T) 
        \big\},
    \end{equation}
    where $T \mathcal{M}_{\delta,x_0,\theta}(\T) \coloneqq \{ e^{i\theta} \partial_\delta Q_{\delta,x_0}^\chi, e^{i\theta} \partial_{x_0} Q_{\delta,x_0}^\chi, ie^{i\theta} Q_{\delta,x_0}^\chi\}$.
    Letting $\tau_{x_0}$ denote translation by $x_0 \in \T$, one can show that $V_{\delta,x_0,\theta}$ is a real vector space of codimension 3 in $L^2(\T)$, orthogonal (with the weight $(1- \partial_x^2)$) to the tangent vectors $e^{i\theta} \partial_\delta Q_{\delta,x_0}^\chi$, $e^{i\theta}\partial_{x_0} Q_{\delta,x_0}^\chi = e^{i\theta} \partial_{x_0} (\tau_{x_0}(\chi Q_\delta))$ and $\partial_\theta (e^{i \theta} Q_{\delta,x_0}^\chi) = i e^{i\theta}Q_{\delta,x_0}^\chi$ of the soliton manifold $\mathcal{M}_{\delta^\ast}$. \medskip

	The following proposition shows that a small neighbourhood of $\mathcal{M}_{\delta^\ast}$ can be endowed with an orthogonal coordinate system in terms of $e^{i\theta} \partial_\delta Q_{\delta,x_0}^\chi$, $e^{i\theta}\partial_{x_0} Q_{\delta,x_0}^\chi$, $i e^{i\theta}Q_{\delta,x_0}^\chi$ and $V_{\delta,x_0,\theta}$.
	Recalling that $\mu$ is supported on functions $u \in H^s(\T)$ for $s<\frac{1}{2}$ and that $\mu(H^\frac{1}{2}) = 0$, we redefine $U_{\varepsilon,\delta^*}$ to mean $U_{\varepsilon,\delta^*} \cap (H^{\frac{1}{2}})^c$. Next, given a small $\delta >0$ and $x_0 \in \T$ we let $V_{\delta,x_0,0} = V_{\delta,x_0,0} \cap H^1(\T)$. Let $\mu_{\delta,x_0}^\perp$ denote the Gaussian measure with $V_{\delta,x_0,0} \subset H^1(\T)$ as its Cameron-Martin space. Then we have the following change of variables for the $\mu$-integration over $U_{\varepsilon,\delta^*}$.
	\begin{lemma}[\cite{OST}, Lemma 6.10] \label{changeofvar}
        Fix $K>0$ and sufficiently small $\delta^\ast>0$.\footnote{The choice of $\delta^\ast$ comes from Proposition 6.4 in \cite{OST}, which introduces an orthogonal coordinate system for $U_{\varepsilon,\delta^\ast}$.} Let $F(u)\ge 0$ be a functional of $u$ on $\T$ which is continuous in some topology $H^s(\T)$, $s<\frac12$, such that $F\le C$ for some $C$ and $F(u)=0$ if $M(u)>K$. Then there is a locally finite measure $\sigma$ on $(0,\delta^{\ast})$ such that
    	\begin{align*} 
            	\int_{U_{\varepsilon,\delta^\ast}} F(u)\, \de\mu(u) \le  &\iiiint_{\widetilde{U}_{\varepsilon,\delta^\ast}} F(e^{i\theta}(Q^\chi_{\delta,x_0}+v)) \\
                &\phantom{\iint}\cdot e^{-\frac12\lVert Q^\chi_{\delta,x_0}\rVert_{H^1(\T)}^2-\innerprod{(1-\partial_x^2)Q^\chi_{\delta,x_0}}{v}_{L^2(\T)}
                }
                \, \de\mu^\perp_{\delta,x_0}(v)\, \de\sigma(\delta)\, \de x_0\, \de\theta,
        \end{align*}
    	where
    	\[
        	\widetilde{U}_{\varepsilon,\delta^\ast}=\{(\delta,x_0,\theta,v)\in(0,\delta^\ast)\times\T\times(\R/2\pi\Z)\times V_{\delta,x_0,0}:e^{i\theta}(Q^\chi_{\delta,x_0}+v)\in U_{\varepsilon,\delta^\ast}\}.\footnote{Here we are using an abuse of notation: $V_{\delta,x_0,0}$ is as in \eqref{ogV} with $\theta=0$, and contains the support of $\mu^\perp_{\delta,x_0}$.}
    	\]
    	Moreover, $\sigma$ is absolutely continuous with respect to the Lebesgue measure $\de\delta$ on $(0,\delta^\ast)$ with $|\frac{\de\sigma}{\de\delta}(\delta)|\lesssim\delta^{-20}$.
	\end{lemma}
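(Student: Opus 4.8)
The assertion is Lemma~6.10 of \cite{OST}, with the equality there weakened to the inequality that is the convenient direction for us, so the plan is to recall its proof in the present notation rather than to redo everything. First I would invoke Proposition~6.4 of \cite{OST}: for $\delta^\ast$ sufficiently small, the map $(\delta,x_0,\theta,v)\mapsto e^{i\theta}(Q^\chi_{\delta,x_0}+v)$ with $v\in V_{\delta,x_0,\theta}$ is a local diffeomorphism onto a full $H^1(\T)$-neighbourhood of $\mathcal{M}_{\delta^\ast}$, so that, for $\varepsilon$ small, its image contains $U_{\varepsilon,\delta^\ast}$. Since $F\ge 0$, it then suffices to bound $\int_{U_{\varepsilon,\delta^\ast}} F\,\de\mu$ from above by the integral of $F$ against the push-forward of $\mu$ under this parametrisation, counted with multiplicity; any failure of injectivity only produces extra non-negative terms, which is precisely why we obtain an inequality rather than an equality and why we need not restrict to a fundamental domain.

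For fixed $(\delta,x_0,\theta)$, the substitution $u=e^{i\theta}(Q^\chi_{\delta,x_0}+v)$ followed by the Cameron--Martin theorem---applicable because $Q^\chi_{\delta,x_0}$ lies in the Cameron--Martin space $H^1(\T)$ of $\mu$---produces exactly the density $e^{-\frac12\norm{Q^\chi_{\delta,x_0}}_{H^1(\T)}^2-\innerprod{(1-\partial_x^2)Q^\chi_{\delta,x_0}}{v}_{L^2(\T)}}$, while rotational invariance of $\mu$ under global phase absorbs $e^{i\theta}$. One then disintegrates $\mu$ with respect to the $(1-\partial_x^2)$-orthogonal splitting into the three-dimensional span of the tangent vectors $e^{i\theta}\partial_\delta Q^\chi_{\delta,x_0}$, $e^{i\theta}\partial_{x_0}Q^\chi_{\delta,x_0}$, $ie^{i\theta}Q^\chi_{\delta,x_0}$ and the complement $V_{\delta,x_0,0}$: the conditional law on the complement is $\mu^\perp_{\delta,x_0}$ by construction, and the marginal on the span is a non-degenerate finite-dimensional Gaussian. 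Changing the linear coordinates along the span into the geometric coordinates $(\delta,x_0,\theta)$ introduces a Jacobian equal to (that Gaussian density)$\times$(the determinant of the change of frame); translation-invariance in $x_0$ and $2\pi$-periodicity in $\theta$ make this factor independent of $x_0$ and $\theta$, so all of it can be packaged into a single measure $\de\sigma(\delta)$ on $(0,\delta^\ast)$, absolutely continuous with respect to $\de\delta$.

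The quantitative bound $\abs{\frac{\de\sigma}{\de\delta}(\delta)}\lesssim\delta^{-20}$ is the main technical point. It follows from estimating, above and below, the Gram determinant of the three tangent vectors in the $(1-\partial_x^2)$-weighted inner product, together with the Gaussian-density factor. Using $Q_{\delta,x_0}(x)=\delta^{-1/2}Q(\delta^{-1}(x-x_0))$, each $\partial_\delta$ costs a power $\delta^{-1}$ and the weight $(1-\partial_x^2)$ costs $\delta^{-2}$, with the cutoff $\chi$ contributing only lower-order corrections as in \Cref{QM estimates}; tracking the worst exponent through the $3\times 3$ determinant and the Gaussian weight yields the crude power $\delta^{-20}$, which is more than sufficient for our later applications.

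A point to keep in mind throughout is that $\mu$ is supported on $H^s(\T)$ for $s<\frac12$ and not on $H^1(\T)$, so the pairing $\innerprod{(1-\partial_x^2)Q^\chi_{\delta,x_0}}{v}_{L^2(\T)}$ and the conditioning onto $V_{\delta,x_0,0}\subset H^1(\T)$ must be read through the Gaussian (Paley--Wiener) structure rather than as genuine $L^2$ or $H^1$ operations; this is exactly why $U_{\varepsilon,\delta^\ast}$ and $V_{\delta,x_0,0}$ were redefined above to sit inside the support of $\mu$. Finally, the hypotheses $0\le F\le C$, continuity of $F$ in some topology $H^s(\T)$ with $s<\frac12$, and $F=0$ on $\{M>K\}$ ensure that all the integrals appearing are finite and that the relevant mass is concentrated near the soliton manifold, hence at small $\delta$, so that restricting the parametrisation to $\delta<\delta^\ast$ costs nothing.
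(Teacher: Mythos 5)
This lemma is quoted verbatim from \cite{OST} (Lemma 6.10) and the paper supplies no proof of its own, so the only meaningful comparison is with the argument in \cite{OST}. Your sketch reproduces that argument's structure correctly — parametrisation of $U_{\varepsilon,\delta^\ast}$ via Proposition 6.4 of \cite{OST}, the Cameron--Martin shift by $Q^\chi_{\delta,x_0}$, disintegration of $\mu$ along the $(1-\partial_x^2)$-weighted orthogonal splitting, the inequality arising from possible overcounting of the non-injective parametrisation (harmless since $F\ge 0$), and the crude Gram-determinant bound for $\de\sigma/\de\delta$ — with the one caveat that in \cite{OST} the measure $\sigma$ and the disintegration are actually obtained through a finite-dimensional (frequency-truncated) approximation and a limiting argument rather than a direct infinite-dimensional coarea formula, a step your sketch elides.
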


    \subsection{Proof of (\ref{goalnbhd})} Next, we want to apply \Cref{changeofvar} to the integral in \eqref{goalnbhd}. The integrand 
	\[
        e^{ \frac16\norm{u}_{L^6(\T)}^6+q\norm{u}_{L^6(\T)}^{6\alpha} }
        \mathbbm{1}_{\{M(u)\le\norm{Q}_{L^2(\R)}^2\}}
	\]
	is neither bounded, nor continuous. However, the lack of continuity is due to the sharp cutoff $\mathbbm{1}_{\{M(u)\le\norm{Q}_{L^2(\R)}^2\}}$ which can be approximated by a smooth cutoff. We can then pass to the limit. We can also replace the integrand by a bounded one by taking 
	\[
        \mleft( \frac16\norm{u}_{L^6(\T)}^6+q\norm{u}_{L^6(\T)}^{6\alpha} \mright) \wedge M
	\]
	inside the exponential and letting $M \to \infty$, as long as we obtain bounds that are uniform in $M$. We will omit these approximation arguments. \medskip

	Applying \Cref{changeofvar} to the integral in \eqref{goalnbhd} we obtain
	\begin{align}
        \int_{U_{\varepsilon,\delta^\ast}} &e^{\frac16\norm{u}_{L^6(\T)}^6+q\norm{u}_{L^6(\T)}^{6\alpha}
        }
        \mathbbm{1}_{\{M(u)\le \norm{Q}_{L^2(\R)}^2\}}\, \de\mu(u) \notag
        \\
        &\leq    \iiiint_{\widetilde{U}_{\varepsilon,\delta^\ast}} e^{I(v)} 
        \mathbbm{1}_{\{M(Q^\chi_{\delta,x_0}+v) \le \norm{Q}_{L^2(\R)}^2\}}
        \, \de\mu^\perp_{\delta,x_0}(v)\, \de\sigma(\delta)\, \de x_0\, \de\theta \notag
        \\
        &=
        2\pi\iint_{\widetilde{U}_{\varepsilon,\delta^\ast}} e^{I(v)} 
        \mathbbm{1}_{\{M(Q^\chi_{\delta}+v)\le \norm{Q}_{L^2(\R)}^2\}}
        \, \de\mu^\perp_{\delta}(v)\, \de\sigma(\delta), \label{2pi}
	\end{align}
    where we used that the surface measure is translation invariant and where $\mu^\perp_{\delta} = \mu^\perp_{\delta,0}$ and
	\[
	    I(v) = \frac16\norm{Q_{\delta}^\chi + v}_{L^6(\T)}^6+q\norm{Q_{\delta}^\chi + v}_{L^6(\T)}^{6\alpha} - \frac{1}{2} \norm{Q_{\delta}^\chi}_{H^1(\T)}^2 - \innerprod{(1-\partial_x^2) Q_{\delta}^\chi}{v}.
	\]
	Observe that 
    $q \norm{Q_\delta^\chi+v}_{L^6(\T)}^{6\alpha} 
    \le 
    c_{q,\alpha} (\norm{Q_\delta^\chi}_{L^6(\T)}^{6\alpha} 
    + 
    \norm{v}_{L^6(\T)}^{6\alpha})$
    (with $c_{q,\alpha}\to 0$ as $q\to 0$) and
    $\norm{Q_{\delta}^\chi}_{L^6(\T)}^{6\alpha} 
    \le 
    \delta^{-2 \alpha} \norm{Q}_{L^6(\R)}^{6 \alpha}$.
    Using \eqref{2pi} with this observation, repeating (6.6.3) to (6.6.9) in \cite{OST}, and letting $B_{\varepsilon_1} \coloneqq \{ \norm{v}_{L^2(\T)} \leq \varepsilon_1 \}$, we arrive at
	\begin{align*}\label{formula(6.6.8)}
        &\int_{U_{\varepsilon,\delta^\ast}} e^{\frac16\norm{u}_{L^6(\T)}^6
        +
        q \norm{u}_{L^6(\T)}^{6\alpha}
        }
        \mathbbm{1}_{\{M(u)\le \norm{Q}_{L^2(\R)}^2\}}\, \de\mu(u)
        \\
        &\lesssim \int_0^{\delta^*} e^{\tilde{c}_{q,\alpha}\delta^{-2 \alpha}} \int_{B_{\varepsilon_1}} e^{c_{q,\alpha}\norm{v}_{L^6(\T)}^{6\alpha}} e^{J(v)} e^{C_\eta \norm{v}_{L^6(\T)}^6} \mathbbm{1}_{\{M(Q^\chi_{\delta}+v)\le \norm{Q}_{L^2(\R)}^2\}}
        \, \de\mu^\perp_{\delta}(v)\, \de\sigma(\delta)
        \\
        &\lesssim \int_0^{\delta^*} e^{\tilde{c}_{q,\alpha}\delta^{-2 \alpha}}\int_{B_{\varepsilon_1}}  e^{J(v)} e^{\widetilde{C}_\eta \norm{v}_{L^6(\T)}^6} \mathbbm{1}_{\{M(Q^\rho_{\delta}+v)\le \norm{Q}_{L^2(\R)}^2\}}
        \, \de\mu^\perp_{\delta}(v)\, \de\sigma(\delta),
	\end{align*}
	where $\tilde{c}_{q,\alpha} = c_{q,\alpha} \norm{Q}_{L^6(\T)}^{6\alpha}$, $\widetilde{C}_\eta = C_\eta + c_{q,\alpha}$, and $J(v)$ is defined by
	\[
        J(v) = (2\delta^{-2} -1) \innerprod{Q_\delta^\chi}{\Rea v}_{L^2(\T)} + (1 + \eta) \innerprod{( Q_\delta^\chi)^4}{2(\Rea v)^2 + \frac{1}{2} \abs{v}^2}_{L^2(\T)}.
	\]
	Here, $\varepsilon_1$ is a small number chosen in \Cref{integrabilityOrthogonal} below.\footnote{This $\varepsilon_1$ determines $\varepsilon$ and $\delta^\ast$ through Proposition 6.4 in \cite{OST}.} 
	\begin{lemma}[\cite{OST}, Lemma 6.12]\label{integrabilityOrthogonal}
        Given any $C'_\eta >0$, there exists a small $\varepsilon_1 >0$ such that
        \[
            \int_{\{ \norm{v}_{L^2(\T)} \leq \varepsilon_1 \}} e^{C'_\eta \int_{\T} \abs{v}^6 \, \de x} \, \de\mu^\perp_{\delta}(v) < \infty
        \]
        uniformly in $0 < \delta \ll 1$.
	\end{lemma}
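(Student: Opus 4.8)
The plan is to run Bourgain's argument for Gaussian integrability of the quintic nonlinearity, localised to the constrained set $B_{\varepsilon_1}$ and carried out for the $\delta$-dependent Gaussian $\mu^\perp_\delta$. By the layer-cake formula it suffices to produce a constant $c>0$, independent of $\delta$ and with $c\gtrsim\varepsilon_1^{-4}$, an exponent $\kappa>1$, and a threshold $\lambda_0=\lambda_0(\varepsilon_1)$, such that
\[
\mu^\perp_\delta\mleft(\{v\in B_{\varepsilon_1}:\norm{v}_{L^6(\T)}^6>\lambda\}\mright)\leq e^{-c\lambda}+e^{-\lambda^{\kappa}}
\]
for all $\lambda\geq\lambda_0$ and all $0<\delta\ll1$; choosing $\varepsilon_1$ small enough that $c>C'_\eta$ (possible since $c\gtrsim\varepsilon_1^{-4}\to\infty$ as $\varepsilon_1\to0$) then gives the claimed integrability, uniformly in $\delta$.

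To prove the tail bound I would split $v=v_{\leq N}+v_{>N}$ at frequency $N=N(\lambda)\coloneqq\lceil\lambda^{1-\epsilon}\rceil$ for a small fixed $\epsilon>0$, so that $\norm{v}_{L^6(\T)}^6\leq 2^5\norm{v_{\leq N}}_{L^6(\T)}^6+2^5\norm{v_{>N}}_{L^6(\T)}^6$ and it is enough to bound the events $\{\norm{v_{\leq N}}_{L^6(\T)}^6>2^{-6}\lambda\}$ and $\{\norm{v_{>N}}_{L^6(\T)}^6>2^{-6}\lambda\}$. For the low-frequency part, the Gagliardo--Nirenberg inequality on $\T$ and the constraint $\norm{v_{\leq N}}_{L^2(\T)}\leq\norm{v}_{L^2(\T)}\leq\varepsilon_1$ give $\norm{v_{\leq N}}_{L^6(\T)}^6\lesssim\varepsilon_1^4\norm{\partial_x v_{\leq N}}_{L^2(\T)}^2+\varepsilon_1^6$, so the first event forces $\norm{\partial_x v_{\leq N}}_{L^2(\T)}^2\gtrsim\varepsilon_1^{-4}\lambda$ once $\lambda\geq\lambda_0(\varepsilon_1)$; since $v\mapsto\norm{\partial_x v_{\leq N}}_{L^2(\T)}$ is $1$-Lipschitz for the Cameron--Martin norm of $\mu^\perp_\delta$ (the $H^1(\T)$-norm on $V_{\delta,0,0}$) and has $\mu^\perp_\delta$-mean $\lesssim N^{1/2}$, the Gaussian concentration inequality gives $e^{-c\varepsilon_1^{-4}\lambda}$, which applies since $\varepsilon_1^{-4}\lambda\gg N=\lambda^{1-\epsilon}$ for large $\lambda$. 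For the high-frequency part, $v\mapsto\norm{v_{>N}}_{L^6(\T)}$ is Lipschitz with constant $\sigma_N\coloneqq\sup_{\norm{h}_{H^1(\T)}\leq1}\norm{h_{>N}}_{L^6(\T)}\lesssim N^{-2/3}$ (from $H^{1/3}(\T)\hookrightarrow L^6(\T)$ and $\norm{h_{>N}}_{H^{1/3}(\T)}\lesssim N^{-2/3}\norm{h}_{H^1(\T)}$), while $\mathbb{E}_{\mu^\perp_\delta}\norm{v_{>N}}_{L^6(\T)}\lesssim N^{-1/2}\to0$; hence the Gaussian concentration inequality produces $e^{-c\lambda^{1/3}N^{4/3}}=e^{-c\lambda^{5/3-4\epsilon/3}}$, i.e.\ the second term above with $\kappa=5/3-4\epsilon/3>1$.

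All of these estimates are uniform in $\delta$ because $\mu^\perp_\delta$ enters only through its Cameron--Martin space $V_{\delta,0,0}\subset H^1(\T)$: the two Lipschitz constants are bounded by suprema over the whole unit ball of $H^1(\T)$, which discard the $\delta$-dependent orthogonality constraint, and the two expectations are controlled via $\mathbb{E}_{\mu^\perp_\delta}\Phi\leq\mathbb{E}_\mu\Phi$ for the convex functionals $\Phi(v)=\norm{v_{>N}}_{L^6(\T)}^6$ and $\Phi(v)=\norm{\partial_x v_{\leq N}}_{L^2(\T)}^2$, a consequence of $\mu$ being the independent sum of $\mu^\perp_\delta$ and a three-dimensional centred Gaussian along the normalised tangent vectors $\partial_\delta Q^\chi_{\delta,0}$, $\partial_{x_0}Q^\chi_{\delta,0}$, $iQ^\chi_{\delta,0}$, together with Jensen's inequality; the resulting $\mu$-side bounds $\mathbb{E}_\mu\norm{v_{>N}}_{L^6(\T)}^6\lesssim N^{-3}$ (reducing by Gaussianity to $\mathbb{E}_\mu|v_{>N}(x)|^2=\sum_{|n|>N}\langle n\rangle^{-2}\lesssim N^{-1}$) and $\mathbb{E}_\mu\norm{\partial_x v_{\leq N}}_{L^2(\T)}^2=\sum_{0<|n|\leq N}(2\pi n)^2\langle n\rangle^{-2}\lesssim N$ are elementary.

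The step I expect to be delicate is the calibration of $N=N(\lambda)$: it must be large enough (a power of $\lambda$ close to $1$) that the high-frequency $L^6$-tail beats $e^{-\lambda}$, yet small enough that the deviation $\varepsilon_1^{-4}\lambda$ driving the low-frequency, $\chi^2$-type concentration dominates the $\sim N$ degrees of freedom there; $N=\lceil\lambda^{1-\epsilon}\rceil$ threads this needle, and it is the place most sensitive to the quantitative Sobolev gain $\sigma_N\lesssim N^{-2/3}$. The only other point requiring care is the uniform-in-$\delta$ implementation of the standard smooth-cutoff and truncation approximations that reduce the indicator-weighted integral to the form treated above, which again comes down to $V_{\delta,0,0}$ being a codimension-$3$ subspace of the fixed Hilbert space $H^1(\T)$.
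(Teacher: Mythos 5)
Your proof is correct, and it is essentially the standard Lebowitz--Rose--Speer/Bourgain mechanism that also underlies the cited result: the paper does not prove this lemma but imports it as Lemma 6.12 of \cite{OST}, whose proof likewise rests on a frequency decomposition, the Gagliardo--Nirenberg inequality activated on the low frequencies by the small-$L^2$ constraint, and Gaussian concentration on the high frequencies. The point specific to this setting --- uniformity in $\delta$ --- is handled correctly: you bound the Lipschitz constants by suprema over the full $H^1(\T)$ unit ball (which only weakens the concentration estimates), and you control the means through $\Exp_{\mu^\perp_\delta}\Phi\le\Exp_\mu\Phi$ for convex $\Phi$, which does follow from writing $\mu$ as the independent sum of $\mu^\perp_\delta$ and a centred three-dimensional Gaussian supported on the tangent space and applying Jensen's inequality (strictly, one should work with an $H^1$-orthonormalisation of the three tangent vectors rather than the vectors themselves, but this does not affect the argument). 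The exponent bookkeeping checks out: with $N=\lceil\lambda^{1-\epsilon}\rceil$ the low-frequency event yields $e^{-c\varepsilon_1^{-4}\lambda}$ because the threshold $\sim\varepsilon_1^{-2}\lambda^{1/2}$ dominates the mean $\lesssim N^{1/2}=\lambda^{(1-\epsilon)/2}$, while the high-frequency event yields $e^{-c\lambda^{5/3-4\epsilon/3}}$ with exponent exceeding $1$, which beats $e^{C'_\eta\lambda}$ irrespective of $\varepsilon_1$; choosing $\varepsilon_1$ so small that $c\varepsilon_1^{-4}>C'_\eta$ then closes the layer-cake integral. Two minor remarks: your choice $N\sim\lambda^{1-\epsilon}$ leaves more room than necessary (the classical calibration $N\sim\varepsilon_1^{-3}\lambda^{1/2}$ also works and balances the two tails), and the closing paragraph about smooth-cutoff approximations is superfluous for this particular lemma, since the integrand involves only the ball $\{\norm{v}_{L^2(\T)}\le\varepsilon_1\}$ and monotone convergence in the layer-cake formula suffices.
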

	\noindent Applying Hölder's inequality and \Cref{integrabilityOrthogonal}, we obtain
	\begin{align*}\label{formulaAlmostThere}
        \int_{U_{\varepsilon,\delta^\ast}} &e^{\frac16\norm{u}_{L^6(\T)}^6+q\norm{u}_{L^6(\T)}^{6\alpha}}\mathbbm{1}_{\{M(u) \le \norm{Q}_{L^2(\R)}^2\}}\, \de\mu(u)
        \\
        &\lesssim \int_0^{\delta^*} e^{\tilde{c}_{q,\alpha}\delta^{-2 \alpha}} 
        \left(
        \int_{B_{\varepsilon_1}}  e^{(1+\eta)J(v)} \mathbbm{1}_{\{M(Q^\chi_{\delta}+v)\le \norm{Q}_{L^2(\R)}^2\}}
        \, \de\mu^\perp_{\delta}(v) 
        \right)^{\frac{1}{1+\eta}}\, \de\sigma(\delta).
	\end{align*}
	Next, using Lemma 6.13 and Proposition 6.14 from \cite{OST}, given any sufficiently small $\eta>0$ there exists a constant $c = c(\eta)$ such that 
	\[
        \int_{B_{\varepsilon_1}}  e^{(1+\eta)J(v)} \mathbbm{1}_{\{M(Q^\rho_{\delta}+v) \le \norm{Q}_{L^2(\R)}^2\}}
        \, \de\mu^\perp_{\delta}(v) \lesssim e^{-c \delta^{-1}}
	\]
    uniformly in $0 < \delta \ll 1$. Applying this bound, we obtain
	\begin{equation*}
        \int_{U_{\varepsilon,\delta^\ast}} e^{\frac16\norm{u}_{L^6(\T)}^6+q\norm{u}_{L^6(\T)}^{6\alpha}} \mathbbm{1}_{\{M(u) \le \norm{Q}_{L^2(\R)}^2\}}\, \de\mu(u)
        \lesssim
        \int_0^{\delta^*} e^{\tilde{c}_{q,\alpha}\delta^{-2 \alpha}} e^{-c\delta^{-1}}
        \, \de\sigma(\delta).	    
	\end{equation*}
	Since $|\frac{\de\sigma}{\de\delta}(\delta)|\lesssim\delta^{-20}$ by \Cref{changeofvar}, we find that the right hand side is finite if $\alpha < \frac{1}{2}$ or $\alpha = \frac{1}{2}$ and $q$ sufficiently small. This concludes the proof of \eqref{goalnbhd}, and hence of the integrability condition \eqref{goal1}.
    \section{The \textnormal{gKdV} case}\label{gKdVcase}
    In this section, we will sketch the proof of \Cref{a.s. bounds NLS}-(ii) which is very similar to the proof of \Cref{a.s. bounds NLS}-(i). The deterministic local theory for \eqref{gKdV} can be found in Theorem 1.1 of \cite{ChapoutoKishimoto} and a substitute for the tail estimate in the Fourier-Lebesgue space setting is given by Lemma 4.1 in \cite{ChapoutoKishimoto}.
    Then, repeating the proof of \Cref{Orlicz tail bound}, we obtain that for $q > 0 $, $\alpha \in (0,1)$, $s < 1 - \frac{1}{p}$ and $F$ as in \eqref{F defn}, there exist $c = c(\alpha, q, s, p)$ and $M_0 = M_0(\alpha,q,s,p)$ such that for $M\geq M_0$
    \begin{equation}\label{Orlicz tail bound gKdV}
        \big\lVert \mathbbm{1}_{\{\norm{u_0}_{\mathcal{F}L^{s,p}(\T)} \geq M\}} \big\rVert_{F^*(L)(\mu)} \lesssim_{\alpha,q,s,p} e^{-cM^{2 \alpha}}.
    \end{equation}
    Using \eqref{Orlicz tail bound gKdV}, the invariance of the Gibbs measure $\rho$ under the flow of \eqref{gKdV}, and \Cref{better integrability} as in \Cref{Our strategy}, we conclude that for $2 < p < \infty$, there exists a $\rho$-measurable set $\Sigma _p \subset \bigcap_{s < 1 - \frac{1}{p}} \mathcal{F}L^{s,p}(\T)$ of full $\rho$-measure such that  solutions $u$ to \eqref{gKdV} with initial data $u_0 \in \Sigma_p$ satisfy for any $s < 1 - \frac{1}{p}$
    \begin{equation}\label{bound FourierLebesgue}
        \sup_{t \in [-T,T]} \norm{u(t)}_{\mathcal{F}L^{s,p}(\T)} \lesssim_{s,p,u_0} \log(2+T).
    \end{equation}
    Defining $\Sigma = \bigcap_{n = 1}^\infty \Sigma_{2 + \frac{1}{n}}$, which is a $\rho$-measurable set of full measure, and using the embedding $\mathcal{F}L^{s + \frac{a}{2},p} (\T) \hookrightarrow H^s(\T)$ for all $2 < p < \infty$ and $a > \frac{p-2}{p}$, we infer that $\Sigma \subset \bigcap_{s < \frac{1}{2}} H^{s}(\T)$ and that solutions $u$ to \eqref{gKdV} with $u_0 \in \Sigma$ satisfy
	\[
		\sup_{t \in [-T,T]} \norm{u(t)}_{H^s(\T)} \lesssim_{s, u_0} \log(2+T)
	\]
	for all $s < \frac{1}{2}$ and $T>0$. This concludes the proof of \Cref{a.s. bounds NLS}-(ii).
	\section*{Acknowledgements}
	The authors would like to thank Leonardo Tolomeo for his suggestion of the problem and help throughout the project. The authors would also like to thank the anonymous referee for helpful comments and Ruoyuan Liu for useful discussions.
	%

\bibliographystyle{amsplain}

\end{document}